\documentclass{article}
\usepackage[a4paper,top=3cm,bottom=3cm,left=2.1cm,right=2.1cm,marginparwidth=1.75cm]{geometry}

\usepackage{graphicx} 

\usepackage{bm}
\usepackage{amsmath, amsfonts, amssymb, amsthm}
\usepackage{mathrsfs, dsfont}
\usepackage[dvipsnames]{xcolor}
\usepackage{graphicx}
\usepackage{verbatim}
\usepackage{float}
\usepackage{comment}
\usepackage{ulem}
\usepackage{subfig}
\usepackage{bigints}

\usepackage[colorlinks=true, allcolors=blue]{hyperref}
\usepackage{xparse} 
\usepackage{hyperref}
\usepackage{tasks}
\settasks{style=itemize}
\usepackage{parskip}
\usepackage{todonotes}

\usepackage[round]{natbib}

\newtheorem{theorem}{Theorem}[section]
\newtheorem{lemma}[theorem]{Lemma}
\newtheorem{definition}[theorem]{Definition}

\newtheorem{proposition}[theorem]{Proposition}

\newtheorem{corollary}[theorem]{Corollary}

\theoremstyle{remark}
\newtheorem{remark}[theorem]{Remark}
\newtheorem{example}[theorem]{Example}

\numberwithin{equation}{section}

\newenvironment{sqremark}{\begin{remark}}{\hfill \tiny $\blacksquare$ \end{remark}}

\usepackage{pifont}

\DeclareMathOperator*{\esssup}{ess\,sup}

\newcommand{\R}{\mathbb{R}}
\newcommand{\N}{\mathbb{N}}

\newcommand{\Q}{\mathbb{Q}}

\newcommand{\E}{\mathbb{E}}
\def\P{\mathbb{P}} 
\newcommand{\F}{\mathcal{F}}

\newcommand{\indic}[1]{\mathds{1}_{\left\{ #1 \right\}}}

\newcommand{\eTA}[1][d]{T((\R^{#1}))}

\newcommand{\emptyword}{{\color{NavyBlue}{\mathbf{\varnothing}}}}
\newcommand{\word}[1]{{\color{NavyBlue}{\mathbf{#1}}}}
\newcommand{\proj}[1]{|_{\word{#1}}}

\newcommand{\bp}{\bm{p}}
\newcommand{\bq}{\bm{q}}

\newcommand{\wv}{\word{v}}

\newcommand{\bpsi}{\bm{\psi}}
\newcommand{\bu}{\bm{u}}

\newcommand{\bchi}{\bm{\chi}}

\newcommand{\bell}{\bm{\ell}}
\newcommand{\br}{\bm{r}}

\newcommand{\shuprod}{\mathrel{\sqcup \mkern -3mu \sqcup}}
\newcommand{\shupow}[1]{^{\shuprod #1}}
\newcommand{\shuexp}[1]{\exp^{\shuprod}\left({#1}\right)}

\newcommand{\conpow}[1]{^{\otimes #1}}

\NewDocumentCommand{\sighat}{O{t} O{W}}{\widehat{\mathbb{#2}}_{#1}}
\NewDocumentCommand{\sigtilde}{O{t} O{W}}{\widetilde{\mathbb{#2}}_{#1}}
\NewDocumentCommand{\sig}{O{t} O{W}}{\sighat[#1][#2]}
\newcommand{\sigW}[1][t]{\widehat{\mathbb{W}}_{#1}}

\newcommand{\sigX}[1][t]{{\mathbb{X}}_{#1}}
\newcommand{\sigY}[1][t]{{\mathbb{Y}}_{#1}}
\newcommand{\sigXhat}[1][t]{{\widehat{\mathbb{X}}}_{#1}}
\newcommand{\sigYhat}[1][t]{{\widehat{\mathbb{Y}}}_{#1}}
\newcommand{\sigXahat}[1][t]{{\widehat{\mathbb{X}}^{\alpha}}_{#1}}
\newcommand{\sigXastarhat}[1][t]{{\widehat{\mathbb{X}}^{\alpha^*}}_{#1}}

\newcommand{\bracket}[2]{ \langle #1,\, #2  \rangle}
\NewDocumentCommand{\bracketsig}{O{t} O{W} m}{\bracket{#3}{\sig[#1][#2]}}

\NewDocumentCommand{\bracketsigtrunc}{O{M} O{t} O{W} m}{\bracket{#4}{\sig[#2][#3]^{\leq #1}}}
\NewDocumentCommand{\bracketsigtilde}{O{t} O{W} m}{\bracket{#3}{\sigtilde[#1][#2]}}

\usepackage{mathtools}
\usepackage{shuffle}
\usepackage{bbm}
\usepackage{authblk}
\mathtoolsset{showonlyrefs}
\newcommand{\Dim}{\color{PineGreen}}

\usepackage{enumitem}
\usepackage{stmaryrd}

\title{
Stochastic control with signatures via Riccati equations on the tensor algebra}

\author[1]{Eduardo Abi Jaber\thanks{eduardo.abi-jaber@polytechnique.edu.  EAJ is grateful for the financial support from the Chaires FiME-FDD and  Financial Risks at Ecole Polytechnique.}}
\author[1]{Elie Attal\thanks{elie.attal@polytechnique.edu.}}
\author[1, 2]{Dimitri Sotnikov\thanks{dmitrii.sotnikov@polytechnique.edu. DS is grateful for the financial support provided by Engie Global Markets.}}
\affil[1]{Ecole Polytechnique, CMAP}
\affil[2]{Engie Global Markets}

\begin{document}

\maketitle

\begin{abstract}
 We solve in semi-explicit form a class of non-Markovian stochastic optimal control problems with path-dependent rewards, using path signatures.~We reformulate the control problem as the computation of Laplace transforms of signature functionals thanks to the Boué--Dupuis representation.~Exploiting recent signature representations of such transforms on tensor algebras, we determine the value process and the optimal control through an infinite-dimensional system of Riccati equations on the extended tensor algebra. We establish an explicit feedback representation of the optimal control and the value process as an infinite linear combination  of the time-extended signature of the controlled process, with time-dependent coefficients. The expansions being intrinsically local, we propose a dynamic recentering algorithm to ensure a global representation over the entire time horizon. We illustrate the approach on genuinely path-dependent, non-linear examples that go beyond the tractable linear-quadratic setting, including the tracking of linear functionals of the signature and signature lifts of Volterra control problems.
 \end{abstract}

\noindent\textbf{Keywords:} Path signatures, Stochastic optimal control, Riccati equation, Bou\'e--Dupuis formula.

\noindent \textbf{Mathematics Subject Classification (2020): }{93E20, 60L10, 	 34G20}


\section{Introduction}

Consider the controlled Brownian motion
\begin{align}\label{eq:introWalpha}
dX_t^\alpha=\alpha_t\,dt+dW_t,    
\end{align}
and the path-dependent stochastic control problem
\begin{align}\label{eq:introobjective}
\sup_{\alpha}
\mathbb{E}\Big[
- \frac 1 2  \int_0^T \alpha_s^2\,ds
+
F\big(X^\alpha\big)
\Big],
\end{align}
where $F(X^\alpha) = F((X_s^\alpha)_{s \in [0, T]})$ is a functional of the entire trajectory.

The problem is well understood with explicit solutions for certain special choices of $F$. For instance, 
$$
F(X)=X_T^2,
\qquad\text{or}\qquad
F(X)
=
\int_0^T X_s^2\,ds,
$$
lead to classical linear--quadratic (LQ) control problems. In these Markovian settings, the value function is quadratic in the state variable, and the optimal control admits a linear feedback form with explicit time-dependent coefficients, which themselves solve a finite-dimensional Riccati equation.  Dynamic programming yields a Hamilton--Jacobi--Bellman equation that can be solved explicitly either by making a quadratic ansatz for the value function, which leads to the associated Riccati equation, or by applying a Cole--Hopf transformation to linearize the partial differential equation, 
 see \citep*{yong1999stochastic}.

A similar tractability persists in certain non-Markovian situations. For example, if
$$
F(X)
=
\Big(
\int_0^T K(T-s)\, dX_s
\Big)^2,
$$
for a locally square-integrable kernel $K$, one obtains a linear--quadratic Volterra control problem. Such problems can be analyzed either through variational arguments leading to linear stochastic Fredholm equations that can be solved explicitly  \citep*{abijaber2023equilibrium}, or by introducing suitable infinite-dimensional state variables that restore the Markov property and yield Riccati equations in an extended state space \citep*{abi2021linear}.

A probabilistic viewpoint that unifies these examples is provided by the \cite*{boue1998variational}  variational representation:
\begin{align}\label{eq:introBD}
\sup_\alpha
\mathbb{E}
\Big[
- \frac 1 2  \int_0^T \alpha_s^2\,ds
+
F(X^\alpha)
\Big]
=
\log
\mathbb{E}
\Big[
\exp({F(W)})
\Big],  
\end{align}
This identity reveals the source of tractability in the previous examples. Indeed, both the Markovian and Volterra LQ cases reduce to computing Laplace transforms of quadratic Gaussian functionals. Such quantities are explicit since they correspond to generalized chi-squared distributions and can be characterized through \cite*{fredholm1903classe}  determinants and resolvents. 

The situation becomes much less clear for general path-dependent and non-quadratic functionals $F$. While the Boué–Dupuis representation \eqref{eq:introBD} still provides an exact characterization of the value of the control problem, allowing for its approximation via Monte Carlo methods through the right-hand side, it does not by itself provide a tractable characterization of optimal controls. Under suitable regularity assumptions on $F$,        an optimal control can be identified with the \cite*{follmer1985entropy}  drift associated with the Gibbs measure
$$
d\mathbb Q^*=\frac{\exp({F(W)})}{\mathbb E[\exp({F(W)})]}d\mathbb P,
$$
namely the unique adapted process $\alpha^*$ such that
$
W_t-\int_0^t \alpha_s^* \,ds
$
is a Brownian motion under $\mathbb Q^*$. When $F$ is Malliavin differentiable, this drift admits the explicit Clark--Ocone representation
\begin{equation}\label{eq:clark_ocone_repr}
    \alpha_t^*=
\frac{\mathbb E\left[\exp({F(W)})\mathbf{D}_tF(W)\mid \mathcal F_t\right]}
{\mathbb E\left[\exp({F(W)})\mid \mathcal F_t\right]},
\end{equation}
see \cite*{Lehec2013}.
Although explicit, this representation generally lacks exploitable structure for analysis or numerical computation.

In such non-Markovian settings, the main challenge is therefore to identify suitable state variables that restore tractability while preserving the path-dependent nature of the objective. Ideally, one seeks a representation that linearizes the dependence on the path, in much the same way that finite-dimensional state augmentation restores Markovianity in classical control problems.

\paragraph{Our contribution.}   The present work develops a tractable framework to tackle a  broad class of non-Markovian  functionals $F$ based on path signatures. Signatures, which are  sequences of iterated integrals introduced by \cite*{chen1957integration},  play the role of polynomials on path space and form a universal family  for continuous paths. By Stone--Weierstrass approximation results, a large class of path functionals can be approximated by linear functionals of the signature; see \cite*{levin2013learning}. Motivated by this universality, we focus on the case where the terminal reward functional $F$ in \eqref{eq:introobjective} is given by a finite linear combination of time-extended signatures of the controlled process $X^{\alpha}$. Combining this representation with the conditional Boué--Dupuis formula transforms the original control problem into the computation of conditional Laplace transforms of linear signature functionals. Exploiting the algebraic structure of signatures together with recent  signature expansions for log-Laplace transforms on tensor algebras derived by \cite*{riccatifourierlaplace}, we show that the resulting control problem can be solved through an infinite-dimensional system of Riccati equations on the extended tensor algebra.

More precisely, our main Theorem~\ref{thm:oc_sig_repr} shows that the optimal control admits a feedback representation as a local signature expansion, whose coefficients solve a Riccati equation on the extended tensor algebra; see \eqref{eq:value_control_local}. In particular, the optimal control is linear in the signature components of the controlled process, revealing a generalized affine structure reminiscent of the classical LQ setting. Moreover, when the functional reduces to a Markovian Linear–Quadratic form, our representation recovers the classical LQ formulas; see Remark~\ref{Ex:LQ}. More generally, in the Markovian setting beyond the LQ case, our formulas correspond precisely to the analytic expansion in the $x$-variable of the solution to the associated Hamilton–Jacobi–Bellman equation; see Remark~\ref{rmq:hjb}. 
 We emphasize, however, that the functional considered here is neither Linear–Quadratic nor Markovian in the controlled variable.   Unlike in the classical LQ framework, our representation is intrinsically local. Recovering a global representation requires {dynamic recentering of the expansion}, which leads to  randomized Riccati equations with path-dependent terminal conditions; see \eqref{eq:Riccati_recentered}.

We illustrate the flexibility of our framework in Section~\ref{section:application} through genuinely path-dependent and nonlinear examples beyond the Linear–Quadratic case, including {tracking linear functionals of the signature}, and show that it naturally encompasses signature lifts of a broad class of Volterra control problems.

Our results provide a theoretical framework for stochastic control in non-Markovian settings beyond the Linear–Quadratic paradigm. They yield an explicit feedback representation of the optimal control in terms of the time-extended signature of the controlled process, thereby establishing a direct connection between non-Markovian stochastic control and Riccati equations on tensor algebras. In contrast to existing signature approaches,  which parametrize controls via signatures and optimize over restricted classes,  we stress  that our method solves the original stochastic control problem over the full class of admissible progressively measurable controls.  
We also expect the underlying signature/Riccati structure to extend to more general control problems and dynamics, although the corresponding questions of existence and convergence for the Riccati system remain highly nontrivial, see for instance~\cite*{abijaber2025signature}.

\paragraph{Related literature.} Our work connects to the growing literature on the use of signatures in stochastic control. Broadly speaking, existing approaches can be divided into three main directions.

The first direction uses signatures as feature maps, or nonlinear transformations thereof, to encode path dependence, as developed for instance in \cite*{kidger2019deep}. In this approach, signatures provide finite-dimensional summaries of historical trajectories, which are then used as inputs to learning architectures or approximation schemes. For instance, \cite*{bayer2024pricing} combines deep-signature and signature-kernel methods to solve non-Markovian optimal stopping problems, while \cite*{abi2025signature} uses signatures as inputs to shallow feedforward neural networks that parametrize the optimal controls for hedging under non-Markovian stochastic volatility, showing that they can outperform recurrent architectures at a  lower training cost.

The second direction uses signatures to parameterize admissible controls, typically through linear functionals of the signature, with coefficients chosen as optimization variables. This approach is motivated by universal approximation results, relying on Stone--Weierstrass arguments. Early examples of this viewpoint appear in \cite*{lyons2020non} in the context of pricing and hedging. More recently, \cite*{bank2024stochastic} introduced general classes of  signature controls, proved their density in the set of admissible progressively measurable controls, and proposed associated numerical methods for computing the optimal control {whereas \cite*{aqsha2026solving} presents a numerical approach based on signature expansions to solve LQ stochastic control problems}. Financial applications and variations of this approach appear in \cite*{cuchiero2025signature,futter2023signature,gennaro2026signature,  kalsi2020optimal}.

A third distinct direction, initiated by \cite*{abijaber2025signature}, lifts the control problem onto signature state variables and establishes, for the first time, a connection  between stochastic optimal control and Riccati equations on the extended tensor algebra. Under the assumption of existence of solutions to the associated Riccati system and convergence of the corresponding signature expansions, a verification theorem is derived, yielding the optimal control in feedback form as a possibly infinite linear combination of the time-extended signatures of the controlled variables, with time-dependent coefficients characterized by an infinite-dimensional Riccati equation. This framework was validated numerically in  non-Markovian settings, illustrating its effectiveness in path-dependent control problems. {However, several fundamental issues
remained unresolved. First, the verification theorem assumes global
convergence of the signature expansion, an assumption that cannot hold
in general: as shown by \cite*{riccatifourierlaplace}, the log-Laplace
expansion has an intrinsically finite radius of convergence outside the
linear-quadratic framework. Second, no class of admissible functionals
was identified for which the Riccati equation provably admits a solution. The present work addresses both issues through a different approach. Rather than pursuing the verification strategy of \cite*{abijaber2025signature}, we exploit the Bou\'e--Dupuis variational representation \eqref{eq:introBD}, which reduces the control problem to the computation of the conditional log-Laplace transforms of linear signature functionals. This reduction allows us to leverage the class $\mathcal B$ and the quantitative bounds from \cite*{riccatifourierlaplace}
 directly, yielding a rigorous framework. Moreover, we introduce a dynamic recentering procedure that overcomes the intrinsic locality of the expansion, producing a theoretically complete and implementable approach.} 

Our work  fully derives {an optimal closed-loop control}  in terms of the signature of the controlled process. In contrast with signature-parametrization approaches, such as \cite*{lyons2020non,bank2024stochastic}, which optimize over a prescribed subclass of controls with time-independent coefficients, our method solves the original stochastic control problem over the full class of admissible progressively measurable controls. The resulting optimal control is represented through genuinely time-dependent coefficients, yielding a richer class of controls and avoiding the analyticity  constraints (in the time variable)  implicit in time-independent parametrizations.

More broadly, related infinite-dimensional Riccati systems have also appeared in uncontrolled problems in mathematical finance, notably for computing characteristic functions of signature SDEs \citep*{cuchiero2025signaturesdesaffinepolynomial} and  signature volatility models \citep*{abi2025signature,abijaber2024fourier}. A general existence and uniqueness theory for such systems {in the genuinely path-dependent case}, together with convergence of the associated power expansions, is delicate and has only recently been established for time-augmented  signatures {of the one-dimensional Brownian motion} by \cite*{riccatifourierlaplace}. Earlier partial results on the Riccati side can be found in \cite*{abijaber2024fourier,cuchiero2025signaturesdesaffinepolynomial}, while results on infinite signature expansions for Markovian and non-Markovian systems appear in \cite*{abi2024path,arous1989flots}.

{\paragraph{Outline.} Section~\ref{section:sig_prelim} provides necessary preliminaries on path signatures. In Section~\ref{section:riccati_prelim}, we define our class of admissible reward functionals, and recall existence results for the Riccati equation on the tensor algebra, as well as conditions for convergence of the log-Laplace transform expansion. Section~\ref{section:oc} presents our main result, Theorem \ref{thm:oc_sig_repr}, in which we explicitly construct feedback maps in the signature of the controlled process for the value process and the optimal control. Section~\ref{section:application} treats the applications and validates the approach numerically, providing illustrations of the signature control with dynamic recentering.
}

\section{Preliminaries on signatures}\label{section:sig_prelim}

This section introduces the main objects and notations used throughout the
paper, we refer to \cite*{riccatifourierlaplace} for more details.

\paragraph{Time-augmented signatures and extended tensor algebra.}
Let $(X_t)_{t \geq 0}$ be a continuous, real-valued semimartingale. We define its
\textit{time-augmented signature} $\sigXhat[s,t]$ over $[s,t]$ as the path
signature of the time-augmented process $\widehat{X}_t := (t, X_t)$, that is,
the collection of all iterated Stratonovich integrals
$$
\sigXhat[s,t]^{\word{i_1 \ldots i_n}}
  := \int_{s \leq t_1 \leq \ldots \leq t_n \leq t}
     \circ d\widehat{X}^{i_1}_{t_1} \ldots \circ d\widehat{X}^{i_n}_{t_n}\,,
$$
for $n \geq 1$ and $i_1, \ldots, i_n \in \{0, 1\}$. These integrals are
indexed by words $\wv$ over the alphabet $A := \{\word{0}, \word{1}\}$,
where $\word{0}$ and $\word{1}$ correspond to integration against $dt$ and
$\circ dX_t$, respectively. We denote by $V$ the set of all finite words
over $A$, including the empty word $\emptyword$, and set
$\sigXhat[s,t]^{\emptyword} := 1$ by convention. We also use the notation $\sigXhat[t] := \sigXhat[0,t]\,$.

The time-augmented signature extends the notion of monomials from classical
analysis to path space: in particular,
$\sigXhat[s,t]^{\word{1 \ldots 1}} = (X_t - X_s)^n / n!$ when the letter
$\word{1}$ appears $n$ times.
Accordingly, the signature inherits an analogous universality: any
sufficiently regular functional $F((X_r - X_s)_{r \in [s,t]})$ can be
approximated by a linear combination of time-augmented signature components.

Identifying the letters $\word{0}$ and $\word{1}$ with the standard basis
vectors $(1,0)$ and $(0,1)$ of $\R^2$, the time-augmented signature
naturally lives in the \textit{extended tensor algebra}
$$
\eTA[2] := \prod_{n=0}^{\infty} (\R^2)^{\otimes n}
  = \bigl\{\bp = (\bp_0, \bp_1, \ldots, \bp_n, \ldots)
    \colon\ \bp_n \in (\R^2)^{\otimes n}\,,\; n \geq 0\bigr\}\,,
$$
where $(\R^2)^{\otimes 0} = \R$ corresponds to the empty-word component.
For each $n \geq 0$, the elementary tensors
$\word{i_1} \otimes \ldots \otimes \word{i_n}$, written as concatenated
words $\word{i_1 \ldots i_n}$, form a natural basis of
$(\R^2)^{\otimes n}$. Any element $\bp \in \eTA[2]$ can thus be formally
expanded as $\bp = \sum_{\wv \in V} \bp^{\wv}\, \wv$, where the
coefficients $\bp^{\wv}$ are real-valued.

We denote by $|\wv|_{\word{i}}$ the number of occurrences of the letter
$\word{i} \in \{\word{0}, \word{1}\}$ in the word $\wv$, and by
$|\wv| := |\wv|_{\word{0}} + |\wv|_{\word{1}}$ its length. For an element
$\bp \in \eTA[2]$, we define its degree and partial
degrees by
$\deg(\bp) := \max\{|\wv| : \bp^{\wv} \neq 0\}$ and
$\deg_{\word{i}}(\bp) := \max\{|\wv|_{\word{i}} : \bp^{\wv} \neq 0\}$,
with the convention $\deg(0) = \deg_{\word{i}}(0) = -\infty$.

The extended tensor algebra is an algebra under the product defined component-wise by
$$
(\bp \otimes \bell)_n
  := \sum_{k=0}^{n} \bp_k \otimes \bell_{n-k}\,,
  \quad n \geq 0\,.
$$
The iterated-integral structure of the signature is naturally compatible
with this product through Chen's identity:
\begin{equation}\label{eq:chen}
\sigXhat[s,u] \otimes \sigXhat[u,t] = \sigXhat[s,t]\,,
  \quad s \leq u \leq t\,.
\end{equation}

\paragraph{Linear functionals of the signature and shuffle product.}
Another key property of path signatures is their ability to linearize
products of linear functionals, as a consequence of integration by parts.
We introduce the pairing
$$
\bracket{\bp}{\sigX[]}
  := \sum_{\wv \in V} \bp^{\wv}\, \sigX[]^{\wv}\,,
  \quad \bp\,,\, \sigX[] \in \eTA[2]\,,
$$
which is well-defined whenever $\bp$ or $\sigX[]$ has finite degree, or
more generally when the seminorm
\begin{equation}\label{eq:def_seminorm}
\|\bp\|_{\sigX[]}
  := \sum_{n \geq 0}
     \biggl|\sum_{|\wv| = n} \bp^{\wv}\, \sigX[]^{\wv}\biggr|
\end{equation}
is finite. The time-augmented signature $\sigXhat[s,t]$ then almost surely
belongs to the set of \textit{group-like elements}:
$$
G := \bigl\{\sigX[] \in \eTA[2]
  \colon\ \sigX[]^{\emptyword} = 1
  \;\;\text{and}\;\;
  \bracket{\wv}{\sigX[]}\, \bracket{\word{w}}{\sigX[]}
    = \bracket{\wv \shuprod \word{w}}{\sigX[]}\,,\;\;
  \wv, \word{w} \in V\bigr\}\,,
$$
where $\wv \shuprod \word{w} \in \eTA[2]$ denotes the \textit{shuffle
product} of $\wv$ and $\word{w}$, a generalization of the Cauchy product
defined as follows.

\begin{definition}
The shuffle product is defined recursively by
$$
\word{vi} \shuprod \word{wj}
  := (\word{v} \shuprod \word{wj}) \otimes \word{i}
   + (\word{vi} \shuprod \word{w}) \otimes \word{j}\,,
$$
for words $\word{v}, \word{w} \in V$ and letters
$\word{i}, \word{j} \in \{\word{0}, \word{1}\}$, with the initialization
$\wv \shuprod \emptyword = \emptyword \shuprod \wv = \wv$.
\end{definition}

Concretely, $\wv \shuprod \word{w}$ is the sum of all interleavings of
$\wv$ and $\word{w}$ that preserve the internal ordering of each word. For
instance, $\word{10} \shuprod \word{1} = \word{101} + \word{110} + \word{110}\,$.

Extending the shuffle product bilinearly to $\eTA[2]$ via
$$
\bp \shuprod \bell
  := \sum_{\wv, \word{w} \in V}
     \bp^{\wv}\, \bell^{\word{w}}\, (\wv \shuprod \word{w})\,,
$$
the group-like property yields
$\bracket{\bp}{\sigX[]}\, \bracket{\bell}{\sigX[]}
  = \bracket{\bp \shuprod \bell}{\sigX[]}$
for $\sigX[] \in G$, whenever $\|\bp\|_{\sigX[]}$ and
$\|\bell\|_{\sigX[]}$ are finite. This follows from the seminorm
\eqref{eq:def_seminorm} being shuffle-compatible:
\begin{equation}
\label{eq:shuffle_compatibility}
\|\bp \shuprod \bell\|_{\sigX[]}
  \leq \|\bp\|_{\sigX[]}\, \|\bell\|_{\sigX[]},
\end{equation}
see \cite*[Section~4.1]{cuchiero2025signaturesdesaffinepolynomial}.
By iterating the shuffle product, we define the \textit{shuffle powers}
$\bp{\shupow{0}} := \emptyword$ and
$\bp{\shupow{n}} := \bp{\shupow{n-1}} \shuprod \bp$, as well as the
\textit{shuffle exponential}. For
$\bp \in \eTA[2]$ with $\bp^{\emptyword} = 0$, it is given by $
\shuexp{\bp}
  := \sum_{n \geq 0}  \bp{\shupow{n}} / n!\,$.
The condition $\bp^{\emptyword} = 0$ ensures that for any fixed word
$\wv$, only finitely many terms contribute to $\shuexp{\bp}^{\wv}\,$. The definition extends to general
elements: for any $\bp \in \eTA[2]$,
$$
\shuexp{\bp} := \exp(\bp^{\emptyword})\, \shuexp{\overline{\bp}}\,,
\quad \text{where} \quad
\overline{\bp} := \bp - \bp^{\emptyword}\, \emptyword\,.
$$

\paragraph{Convergence classes.} Since linear functionals of the signature
generally involve infinite sums, their convergence requires care. Throughout the paper, we focus on evaluating such functionals at group-like elements. We introduce
$$
M_{\bp}(\sigX[])
  := \max_{1 \leq |\wv| \leq \deg(\bp)} |\sigX[]^{\wv}| \,, \quad \bp \in \eTA[2]\,,
$$
as a measure of how far $\sigX[]$ truncated at order $\deg(\bp)$ is from $\emptyword\,$, and the corresponding ball of radius $r > 0$
in $G$:
\begin{equation}
\label{eq:def_G_p_r}
G_{\bp, r}
  := \bigl\{\sigX[] \in G
     \,:\, M_{\bp}(\sigX[]) \leq r\bigr\}\,.
\end{equation}
An element satisfying $\|\bp\|_{\sigX[]} < +\infty$ for all
$\sigX[] \in G_{\bp, r}$ thus yields a linear functional
with finite radius of convergence $r$.

\paragraph{Left and right shifts.} For a power series
$\sum_{n \geq 0} \bp_n x^n / n!\,$, differentiation with respect to $x$
shifts the coefficient sequence $(\bp_0, \bp_1, \ldots)$ into
$(\bp_1, \bp_2, \ldots)$.  A natural extension of such coefficient shifts to the extended tensor algebra are
the \textit{right} and \textit{left shifts} defined by
\begin{equation}
\label{eq:def_shifts}
\bp|_{\word{v}}
  := \sum_{\word{w} \in V} \bp^{\word{wv}}\, \word{w}\,, \quad \text{and} \quad {}_{\word{v}}|\bp
  := \sum_{\word{w} \in V} \bp^{\word{vw}}\, \word{w}\,,
  \quad \word{v} \in V\,.
\end{equation}
These operators appear naturally as differentiation operators when
applying It\^o's formula to $\bracket{\bp}{\sigXhat[t]}$ (see \cite*[Theorem 3.5]{riccatifourierlaplace}). By linearity, both extend to shifts by elements of the extended tensor
algebra:
\begin{equation}
\label{eq:def_X_shifts}
\bp|_{\sigX[]}
  := \sum_{\word{v} \in V} \sigX[]^{\word{v}} \cdot \bp|_{\word{v}}
   = \sum_{\word{v} \in V}
     \bracket{{}_{\word{v}}|\bp}{\sigX[]} \cdot \word{v}\,,
\quad \text{and} \quad
{}_{\sigX[]}|\bp
  := \sum_{\word{v} \in V} \sigX[]^{\word{v}} \cdot {}_{\word{v}}|\bp
   = \sum_{\word{v} \in V}
     \bracket{\bp|_{\word{v}}}{\sigX[]} \cdot \word{v}\,,
\end{equation}
which are well-defined whenever
$\|\bp|_{\word{v}}\|_{\sigX[]} < +\infty$ or
$\|{}_{\word{v}}|\bp\|_{\sigX[]} < +\infty$ for all $\wv \in V$,
respectively.

If $\bp$ has finite degree and
$\sigX[], \sigY[] \in \eTA[2]$, then
\begin{equation}
\label{eq:recentering_bracket}
\bracket{\bp}{\sigX[] \otimes \sigY[]}
  = \bracket{{}_{\sigX[]}|\bp}{\sigY[]}
  = \bracket{\bp|_{\sigY[]}}{\sigX[]}\,.
\end{equation}
This shows that left and right shifts are dual to left and right tensor
multiplication. Moreover, \eqref{eq:recentering_bracket} is the tensor algebra analogue of
the recentering of polynomials  or power series; see \cite*[Remark 3.2]{riccatifourierlaplace}.

\section{Riccati equations on the extended tensor algebra}\label{section:riccati_prelim}

In this section, we introduce our admissible class of reward functions for
the control problem  and discuss the existence of the solution to the Riccati equation on the extended tensor algebra. 

We seek reward functionals that are linear functionals
of the time-augmented signature,
$F(X) = \bracket{\bp}{\sigXhat[T]}$, and define a class
$\mathcal{B}$ of admissible coefficients $\bp \in \eTA[2]$ for which the value of the optimal control problem \eqref{eq:introobjective} is finite. Thanks to the Boué--Dupuis formula \eqref{eq:introBD}, this question reduces to the existence of the Laplace transform $\E[\exp(\bracket{\bp}{\sigW[T]})]$. For this problem, an appropriate class $\mathcal{B}$ was introduced in \cite*[Section 4.1]{riccatifourierlaplace} as the
one allowing the conditional Laplace transform
$\E[\exp(\bracket{\bp}{\sigW[T]})\, |\, \mathcal{F}_t]$ and its logarithm
to be finite and expandable as linear functionals of the running signature
$\sigW[t]$. Consequently, it is also the natural class of admissible
terminal conditions ensuring existence of a solution to the signature
Riccati equation.

The main ideas underpinning the class $\mathcal{B}$ are the following.
First, we restrict to $\bp$ with finite degree, though the degree can be
arbitrarily large. Second, the class is built around two dominant terms
that control the remaining lower-degree contributions, in the spirit of
polynomials whose asymptotic behaviour is governed by their leading term.
Finally, the identification of this class was made possible by specific
estimates for time-augmented signatures of one-dimensional {continuous semimartingales} 
derived in \cite*{riccatifourierlaplace}, based on a powerful algebraic structure, namely that the shuffle algebra is generated by \cite*{Lyndon1954}  words due to the \cite{Radford1979ANR} theorem.

We now state our definition of the class $\mathcal{B}$, which matches the
one in \cite*{riccatifourierlaplace} restricted to real-valued coefficients.
\begin{definition}\label{def:class_B}
We say that $\bp \in \mathcal{B} \subset T((\R^2))$ if there exist three
real numbers $a, b \geq 0$, $c \in \R$, two natural numbers
$m, n \in \N$, and three coefficients
$\bq, \br, \bm{s} \in T((\R^2))$ of finite degree, satisfying
\begin{align}\label{eq:class_B_coefs_cond}
  \deg_{\word{1}}(\bq) < 2n\,, \quad
  \deg_{\word{0}}(\bq) = 0\,, \quad
  \deg_{\word{1}}(\br) < 2m\,, \quad
  \deg_{\word{1}}(\bm{s}) < 2m \land 2n\,,
\end{align}
such that
\begin{equation}\label{eq:class_B_p}
  \bp = -a(\word{1}\conpow{2n} + \bq)
      - b(\word{1}\conpow{2m} + \br)\word{0}
      + ab\cdot\bm{s}
      + c \emptyword\,.
\end{equation}
The time degrees $\deg_{\word{0}}(\br)$ and $\deg_{\word{0}}(\bm{s})$ are
only required to be finite.
\end{definition}

We now analyse each term on the right-hand side of \eqref{eq:class_B_p} to
provide intuition on the reward functionals it generates.
\begin{itemize}
  \item \textbf{Markovian terminal reward.} The term
    $\bp_1 := -a(\word{1}^{\otimes 2n} + \bq)$ contains only the letter
    $\word{1}$ (since $\deg_{\word{0}}(\bq) = 0$), so
    $\bracket{\bp_1}{\sigXhat[T]} = P_1(X_T)$ is a polynomial in the
    terminal value $X_T$. The assumptions force $P_1$ to have even degree
    and negative leading coefficient, hence to be bounded from above. This is a natural requirement on $P_1$ since otherwise, when $\deg P_1 \geq 3$, the optimal control problem would have infinite value.

  \item \textbf{Running reward.} The term
    $\bp_2 := -b(\word{1}^{\otimes 2m} + \br)\word{0}$ yields an integrated
    functional
    $$\bracket{\bp_2}{\sigXhat[T]}
      = -b\int_0^T \left(\frac{(X_t - X_0)^{2m}}{(2m)!}
        + \bracket{\br}{\sigXhat[t]}\right)\, dt.
    $$
    Unlike the terminal reward, $\br$ may contain both letters $\word{0}$
    and $\word{1}$, making this term very general. The only requirement is
    the leading integrated monomial $X_t^{2m}$ with negative coefficient. As an
    example, if $\deg(\bell) = m$ and
    $\bell^{\word{1}^{\otimes m}} \neq 0$, then
    $\bp = -\bell^{\shupow{2}}\word{0}$, corresponding to
    $\bracket{\bp}{\sigXhat[T]} = -\int_0^T
      \bracket{\bell}{\sigXhat[t]}^2\, dt$,
    satisfies these conditions and belongs to $\mathcal{B}$.

  \item \textbf{General regularized coefficient.} The term $\bm{s}$ is a
    general element of $\eTA[2]$ with finite degree: no specific internal
    structure is imposed on it, unlike $\bq$ and $\br$. Such terms naturally arise when approximating path-dependent functionals by linear functionals of the signature, via the universality property. In general, however, such $\bm{s}$ do not belong to $\mathcal B$. To enforce this admissibility, one introduces the regularized combination
$
-a \cdot\word{1}^{\otimes 2n} - b \cdot\word{1}^{\otimes 2m}\word 0 + ab\cdot \bm{s},$ with $\deg_{\word{1}}(\bm{s}) < 2m \land 2n$ 
whose structure ensures that the associated $\bp$ lies in $\mathcal B$.

\end{itemize}

We note in passing that $\mathcal{B}$ is a convex cone, so that any linear combination with positive coefficients of admissible rewards defines an admissible reward as well.

We now consider the infinite-dimensional Riccati equation, arising from the Laplace transform \eqref{eq:introBD} when taking $F(X) = \bracket{\bp}{\sigX[]}$:
\begin{equation}\label{eq:Riccati_eq}
    \begin{cases}
        \dot{\bpsi}_t + \bpsi_t|_{\word 0} + \frac{1}{2}\bpsi_t|_{\word{11}} + \frac{1}{2}(\bpsi_t|_{\word{1}})\shupow{2} = 0\,, \quad t \in [0\,, T]\,, \\
        \bpsi_T = \bp
    \end{cases}
\end{equation}
We recall the existence result for \eqref{eq:Riccati_eq}
proved in \cite*[Theorems~4.7 and Theorem~5.7]{riccatifourierlaplace}.
\begin{theorem}
\label{thm:results_papier1}
    Let $\bp \in \mathcal B$ and $T > 0\,$. There exists a solution $\bpsi : [0\,, T] \to \eTA[2]\,$ to the Riccati equation \eqref{eq:Riccati_eq}, such that $\bpsi^{\word v} \in C^1([0\,,T], \R)$ for all $\wv \in V\,$, and
    \begin{enumerate}
        \item For all $t \in [0\,,T]\,$, $\varepsilon \in (0\,,1)\,$, and $\sigX[] \in G$ satisfying
        \begin{equation}
        \label{eq:cv_condition}
        \|\shuexp{\overline{\bpsi_t}}\|_{\sigX[]} \leq 2 - \varepsilon \,, \quad \text{where} \quad \overline{\bpsi_t} := \bpsi_t - \bpsi_t^{\emptyword} \emptyword\,,
        \end{equation}
        we have $\|\bpsi_t\|_{\sigX[]} < \infty$ and 
        $$
        \log \E\left[\exp\left(\bracket{\bp}{\sigX[] \otimes \sigW[t,T]}\right) \right] = \bracket{\bpsi_t}{\sigX[]}\,.
        $$
        \item For any $\varepsilon\in(0, 1)$, there exists $r = r(\bp, T, \varepsilon) > 0$ such that \eqref{eq:cv_condition} holds uniformly over $t \in [0\,,T]$ for any $\sigX[] \in G_{\bp, r}\,$; see \eqref{eq:def_G_p_r}.
        \item For any word $\word v \in V$ and $\varepsilon \in (0\,,1)\,$, there exists a function $f_{T, \varepsilon, \word v} : \eTA[2] \times \R_+ \times \R_+ \to \R_+\,$, measurable in its first argument and continuous in the second and the third, such that for all $t \in [0\,,T]$ and $\sigX[] \in G$ satisfying \eqref{eq:cv_condition} with $\varepsilon$, we have 
        \begin{equation}
        \label{eq:bound_psi_proj_v}
        \|\bpsi_t|_{\word v}\|_{\sigX[]} \leq f_{T, \varepsilon, \word v}(\bp, t, M_{\bp}(\sigX[]))\,.
        \end{equation}
    \end{enumerate}
\end{theorem}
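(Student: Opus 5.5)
This result is imported from \cite*{riccatifourierlaplace}, but a self-contained proof would go through the \emph{linear} (Laplace) problem rather than the Riccati equation directly. The plan is to construct the solution through the shuffle exponential. Set $\bm{u}_t := \shuexp{\bpsi_t}$. Formally, $\bpsi$ solves \eqref{eq:Riccati_eq} if and only if $\bm{u}$ solves the linear backward equation
\[
\dot{\bm{u}}_t + \bm{u}_t|_{\word 0} + \tfrac12 \bm{u}_t|_{\word{11}} = 0, \qquad \bm{u}_T = \shuexp{\bp}.
\]
This is the tensor-algebra Cole--Hopf transform: the shifts act as derivations with respect to the shuffle product, so $\shuexp{\bpsi}|_{\word 1} = \shuexp{\bpsi}\shuprod \bpsi|_{\word 1}$, and differentiating twice produces the quadratic term.

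The linear equation is triangular in the time-degree/word-length grading, so it has the explicit candidate solution
\[
\bm{u}_t = \sum_{k\ge0}\frac{(T-t)^k}{k!}\, L^k \shuexp{\bp}, \qquad L\bm{v} := \bm{v}|_{\word 0} + \tfrac12\bm{v}|_{\word{11}}.
\]
By Itô's formula (\cite*[Theorem 3.5]{riccatifourierlaplace}) and Chen's identity \eqref{eq:chen}, $\bracket{\bm{u}_t}{\sigX[]}=\E[\exp(\bracket{\bp}{\sigX[]\otimes\sigW[t,T]})]$ whenever the series converge. One then defines $\bpsi_t := \log^{\shuprod}\bm{u}_t = \bm{u}_t^{\emptyword}$-normalised logarithm series in $\overline{\bm{u}_t}/\bm{u}^{\emptyword}_t$. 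Componentwise this is a finite sum for each word, which gives $C^1$ regularity of each $\bpsi^{\wv}$ and the Riccati equation by direct differentiation.

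For item 1, I would use the shuffle compatibility \eqref{eq:shuffle_compatibility} on the log series. Under \eqref{eq:cv_condition}, $\|\shuexp{\overline{\bpsi_t}}-\emptyword\|_{\sigX[]}\le 1-\varepsilon$, so $\log(1+x)$ converges geometrically. This gives $\|\bpsi_t\|_{\sigX[]}<\infty$ and
\[
\bracket{\bpsi_t}{\sigX[]}=\log\bracket{\bm u_t}{\sigX[]}=\log\E[\cdots].
\]
The group-like property is what justifies passing pairings through shuffle powers.

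Item 3 follows by the same argument applied to $\bpsi_t|_{\wv}$. Iterating the derivation rule expresses this shift as a finite shuffle polynomial in the shifts of $\bm{u}_t$ divided by powers of $\bm u_t$. Each factor is then bounded in terms of $M_{\bp}(\sigX[])$ and $t$, and the resulting bound is continuous in $(t,M)$.

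The main obstacle is item 2 together with the finiteness of the Laplace transform, i.e.\ showing $\|\bm{u}_t\|_{\sigX[]}<\infty$ uniformly for $\sigX[]\in G_{\bp,r}$ with $r$ small. This is exactly where membership $\bp\in\mathcal B$ is needed. The two dominant terms $-a\word1^{\otimes 2n}$ and $-b\word1^{\otimes2m}\word0$ make $\exp\bracket{\bp}{\cdot}$ integrable: they dominate the lower-order parts $\bq,\br$ and the cross term $ab\,\bm s$ via Young-type inequalities. Recentering via \eqref{eq:recentering_bracket} then shows that the conditional transform is analytic in the finitely many coordinates of $\sigX[]$ up to degree $\deg(\bp)$. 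Finally, a Cauchy-type estimate bounds the coefficients, and continuity at $\sigX[]=\emptyword$ (where the left side of \eqref{eq:cv_condition} equals $1$) yields an $r$ with the bound $\le 2-\varepsilon$.

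I expect this step to require the Lyndon-word/Radford estimates for one-dimensional time-augmented signatures. Those estimates control $|\sigX[]^{\wv}|$ for all words by the finitely many Lyndon coordinates, and I would import them from \cite*{riccatifourierlaplace} rather than reprove them.
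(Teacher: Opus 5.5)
Your overall architecture is sound: the Cole--Hopf transform $\bm{u}_t=\shuexp{\bpsi_t}$, the shuffle logarithm, and the geometric bound $\|\overline{\bpsi_t}\|_{\mathbb X}\le\log(\varepsilon^{-1})$ under \eqref{eq:cv_condition}. That bound is exactly the $f_{T,\varepsilon,\emptyword}$ the paper extracts, and the shuffle inverse $\shuexp{-\bpsi_t}$, of seminorm at most $\varepsilon^{-1}e^{-\bpsi_t^{\emptyword}}$, appears in its $f_{T,\varepsilon,\wv}$. However, the step everything rests on, the construction of $\bm{u}_t$, fails.

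The linear system is not triangular in any usable sense. The equation $\dot{\bm u}^{\word w}_t=-\bm u^{\word{w0}}_t-\tfrac12\bm u^{\word{w11}}_t$ couples each coefficient to \emph{longer} words. Since $\shuexp{\bp}$ has infinite degree, $L$ is not nilpotent on it, so for each fixed word your series in $T-t$ is genuinely infinite, and for typical $\bp\in\mathcal B$ it diverges. Take $\bp=-\word{1}^{\otimes 4}\in\mathcal B$ (with $a=1$, $n=2$, $\bq=\emptyword$, $c=1$, $b=0$). Then $\shuexp{\bp}$ is supported on the words $\word{1}^{\otimes 4j}$, so in $(L^k\shuexp{\bp})^{\emptyword}$ only $k=2j$ shifts by $\word{11}$ survive. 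The empty-word component is therefore the formal Taylor series in $\tau=T-t$ of $\E[\exp(-W_\tau^4/24)]$:
\[
\sum_{j\ge 0}\frac{(-1)^j\,(4j)!}{24^j\,4^j\,j!\,(2j)!}\,\tau^{2j}.
\]
Its consecutive-term ratio is $\frac{(4j+1)(4j+3)}{24(j+1)}\tau^2\sim\frac23 j\tau^2\to\infty$, so it diverges for every $\tau>0$. This is Kovalevskaya's phenomenon for the heat equation with data of growth order larger than $2$, and it occurs even though the Laplace transform itself is finite. Hence $\bm u_t$ is not defined as an element of $\eTA[2]$. The $C^1$ regularity, the Riccati equation and item~1 then have nothing to stand on, and your proviso \emph{whenever the series converge} never applies here. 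Moreover, infinite systems of this type need not have unique solutions (as in Tychonoff's example). So even a solution obtained by other ODE means would need growth bounds before your It\^o verification could upgrade the local martingale to a true martingale and identify it with the Laplace transform.

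The missing idea is to reverse the logic and \emph{define} $\bm u_t$ by its probabilistic representation, $\bm u_t^{\word v}:=\E\bigl[\bracket{{}_{\word v}|\shuexp{\bp}}{\widehat{\mathbb W}_{t,T}}\bigr]$. By \eqref{eq:recentering_bracket} this gives $\bracket{\bm u_t}{\mathbb X}=\E[\exp(\bracket{\bp}{\mathbb X\otimes\widehat{\mathbb W}_{t,T}})]$, and the linear equation, hence $C^1$ regularity of each component, is then derived from the Markov property and It\^o's formula rather than solved for. This is where membership in $\mathcal B$ enters first, before item~2. The dominant terms bound the exponent from above, and together with signature moment estimates this makes these expectations finite. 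It also justifies exchanging $\E$ with the infinite sums over words, and shows that $\bm u_t$ is an entire functional with $\bm u_t^{\emptyword}>0$. Only after that do your items~1--3 go through; the paper takes all of this, together with the radius $r$ and the explicit bounds, from \cite*{riccatifourierlaplace}. Finally, your item~3 does not address that $f_{T,\varepsilon,\wv}$ must be measurable in $\bp$. This is needed later, because the theorem is applied to the random coefficient ${}_{\widehat{\mathbb X}^{\alpha^*}_\sigma}|\bp$.
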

\begin{proof}
    The first point is direct from \cite*[Theorem 4.7, Theorem 5.7]{riccatifourierlaplace}. 

    The proof of \cite*[Theorem 4.7]{riccatifourierlaplace} gives 
    $$
    f_{T, \varepsilon, \emptyword}(\bp, t, x) = |\bpsi_t^{\emptyword}| + \log(\varepsilon^{-1})\,,
    $$
    which is continuous in $t$ from the third point of the same theorem. For $\wv \neq \emptyword\,$, the proof of \cite*[Theorem 5.7]{riccatifourierlaplace} yields 
    $$
    f_{T, \varepsilon, \wv}(\bp, t, x) = C'_{\bp, T, \wv} \left((1 + \varepsilon^{-1} \exp(- \bpsi_t^{\emptyword}))\,(1 + x) \,\exp(C_{\bp, T}(1 + x)^{\deg_{\word 1}(\bp)}) \right)^{|\wv|}\,,
    $$
    where the constants $C_{\bp, T, \wv}'$ and $C_{\bp, T}$ can be chosen to be measurable functions of $\bp\,$, as they are derived from basic inequalities throughout the paper. The function $f_{T, \varepsilon, \wv}$ thus also satisfies the assumptions of Theorem \ref{thm:results_papier1}, which finishes the proof.
\end{proof}

Theorem~\ref{thm:results_papier1} establishes only a local expansion of the log-Laplace transform, hence proving its analyticity, but not that it is entire. This result is sharp in the sense that in general one cannot expect the log-Laplace transform, closely related to the HJB equation, to be entire; see \cite*[Section 4.4]{riccatifourierlaplace} for a more detailed discussion. 
For the optimal control problem, one expects to be able to expand the value function only locally around the current signature of the controlled process.
Indeed, at time $t$, given the signature $\sigXahat$ of the realized control variable path, one can formally apply the Boué--Dupuis formula to obtain the value-to-go
$$
\sup_{\alpha \in \mathcal{A}_{t, T}}
\mathbb{E}
\Big[
- \frac 1 2  \int_t^T \alpha_s^2\,ds
+
\bracket{\bp}{\sigXahat\otimes\sigXahat[t,T]}
\Big]
=
\log
\mathbb{E}
\Big[
\exp({\bracket{\bp}{\sigXahat\otimes\sigW[t,T]}})
\Big]. 
$$
Here $\mathcal{A}_{t, T}$ denotes the set of admissible controls between $t$ and $T$, and $\sigW[t,T]$ the time-augmented signature of a standard Brownian motion, over which the expectation is taken. Theorem~\ref{thm:results_papier1} then states that the signature expansion of the log-Laplace transform on the right-hand side converges if $\|\shuexp{\overline{\bpsi_t}}\|_{\sigXahat[t]} \leq 2 - \varepsilon$. This, in turn, is true only when $\sigXahat[t]$ is close to the trivial signature $\emptyword$, which is, of course, not always the case. To overcome this difficulty, one can choose $t_0 \leq t$ such that $\sigXastarhat[t_0, t]$ is small enough, and use Chen's identity along with the recentering property \eqref{eq:recentering_bracket} to obtain
$$
\log
\mathbb{E}
\Big[
\exp({\bracket{\bp}{\sigXahat\otimes\sigW[t,T]}})
\Big] =
\log
\mathbb{E}
\Big[
\exp({\bracket{\bp}{\sigXahat[t_0]\otimes\sigXahat[t_0, t]\otimes\sigW[t,T]}})
\Big] =
\log
\mathbb{E}
\Big[
\exp({\bracket{{}_{\sigXahat[t_0]}|\bp}{\sigXahat[t_0, t]\otimes\sigW[t,T]}})
\Big],
$$
which reduces to the initial log-Laplace transform problem with $\sigXahat[t]$ replaced by $\sigXahat[t_0, t]$ and $\bp$ replaced by ${}_{\sigXahat[t_0]}|\bp$. To apply Theorem~\ref{thm:results_papier1} again, one should only ensure that the shifted coefficient ${}_{\sigXahat[t_0]}|\bp$ is admissible, which is true by virtue of the following proposition \cite*[Lemma 4.4]{riccatifourierlaplace}.
\begin{proposition}\label{prop:left-proj-compatibility}
    If $\bp \in \mathcal{B}$ and $\sigX[] \in T((\R^2))$ such that $\sigX[]^{\emptyword} = 1$, then $~_{\sigX[]}|\bp \in \mathcal{B}$.
\end{proposition}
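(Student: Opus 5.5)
We plan a direct verification: write $\bp$ in the form \eqref{eq:class_B_p}, compute ${}_{\sigX[]}|\bp$ term by term using linearity of the left shift, and regroup the result into the same form with the \emph{same} constants $a,b,m,n$ and new $\bq',\br',\bm{s}',c'$. By \eqref{eq:def_X_shifts} and linearity, ${}_{\sigX[]}|\bp = \sum_{\wv}\sigX[]^{\wv}\,{}_{\wv}|\bp$. Since every term of $\bp$ has finite degree, only the words $\wv$ with $|\wv|\le\deg(\bp)$ contribute, so the sum is finite. The term $\wv=\emptyword$ returns $\bp$ itself, because $\sigX[]^{\emptyword}=1$. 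The left shift by a nonempty word strictly lowers the length of words and never increases $\deg_{\word 0}$ or $\deg_{\word 1}$.

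We treat the pieces of $\bp$ in turn.

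\emph{Markovian piece.} For $-a(\word 1^{\otimes 2n}+\bq)$, a left shift of a word containing only the letter $\word 1$ is nonzero only for $\wv=\word 1^{\otimes k}$, and it yields $\word 1^{\otimes(2n-k)}$ or the corresponding shift of $\bq$. All these contributions contain only letters $\word 1$ and have $\word 1$-degree below $2n$ when $k\ge1$. Collecting them gives $-a(\word 1^{\otimes 2n}+\bq')$ with $\deg_{\word 0}(\bq')=0$ and $\deg_{\word 1}(\bq')<2n$. The empty-word component of $\bq'$ may be nonzero; we move it into $c'$ if needed.

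\emph{Running piece.} For $-b(\word 1^{\otimes 2m}+\br)\word 0$, the words are of the form $\word u\word 0$. For $\wv$ nonempty, ${}_{\wv}|(\word u\word 0)$ is either $({}_{\wv}|\word u)\word 0$ when $|\wv|\le|\word u|$, or $\emptyword$ when $\wv=\word u\word 0$. The first case keeps the right factor $\word 0$ and gives words of $\word 1$-degree at most $\deg_{\word 1}(\word u)$, which is $<2m$ for nonzero shifts of $\word 1^{\otimes 2m}$ and for all shifts of $\br$. The second case contributes a scalar, which we absorb into $c'$. This yields $-b(\word 1^{\otimes 2m}+\br')\word 0$ with $\deg_{\word 1}(\br')<2m$.

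\emph{Remaining pieces.} The term $ab\,\bm{s}$ becomes $ab\,\bm{s}'$ with $\bm{s}'={}_{\sigX[]}|\bm{s}$. Since shifts do not increase $\deg_{\word 1}$, we keep $\deg_{\word 1}(\bm{s}')<2m\wedge 2n$, and $\bm{s}'$ has finite degree. The constant $c\emptyword$ is left-shifted nontrivially only by $\emptyword$, so it stays $c\emptyword$. The new constant is $c'=c$ plus the scalar contributions from the previous pieces, divided appropriately.

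The main obstacle is the degenerate case where $a=0$ or $b=0$: the scalars absorbed into $c'$ are then fine, but the $\bm{s}$ term vanishes, so there is nothing to worry about. The real subtlety is to check that pieces not matching $\word 1^{\otimes 2n}$ or $\word 1^{\otimes 2m}\word 0$ are correctly placed inside $\bq',\br'$ rather than needing to go into $\bm{s}'$. Placing them in $\bm{s}'$ would require the prefactor $ab$, which is unavailable if one of $a,b$ vanishes. The computation above shows this never occurs, because each shift of the $a$-piece stays $\word 0$-free and each shift of the $b$-piece keeps its trailing $\word 0$ or reduces to a constant. The argument also relies on the condition that $\bq',\br'$ inherit finite degree, which is immediate.
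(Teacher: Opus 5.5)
Your term-by-term verification is correct. Since $\mathbb{X}^{\emptyword}=1$, the leading terms $-a\,\word{1}^{\otimes 2n}$ and $-b\,\word{1}^{\otimes 2m}\word{0}$ are unchanged, and left shifts never increase $\deg_{\word 0}$ or $\deg_{\word 1}$. The identity ${}_{\mathbb{X}}|(\bell\word{0})=({}_{\mathbb{X}}|\bell)\word{0}+\bracket{\bell\word{0}}{\mathbb{X}}\,\emptyword$ then yields the only term that has to be moved into the constant, giving $c'=c-b\,\bracket{(\word{1}^{\otimes 2m}+\br)\word{0}}{\mathbb{X}}$; this is a product rather than a quotient, so your phrase ``divided appropriately'' should be read that way.

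The paper gives no proof of its own here and simply imports the statement from \cite*[Lemma 4.4]{riccatifourierlaplace}, for which your argument is the natural direct proof.
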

Theorem~\ref{thm:results_papier1} yields the following expansion 
$$
\sup_{\alpha \in \mathcal{A}_{t, T}}
\mathbb{E}
\Big[
- \frac 1 2  \int_t^T \alpha_s^2\,ds
+
\bracket{\bp}{\sigXahat[T]}
\Big]
=
\log
\mathbb{E}
\Big[
\exp({\bracket{\bp}{\sigXahat\otimes\sigW[t,T]}})
\Big] = \bracket{\bpsi_t^{\sigXahat[t_0]}}{\sigXahat[t_0,t]},
$$
where, for $\mathbb{X} \in G$, $\bpsi_t^{\sigX[]}$ denotes the solution to the recentered Riccati equation 
\begin{equation}\label{eq:Riccati_recentered}
    \begin{cases}
\dot{\bpsi}_t^{\sigX[]} + \bpsi^{\sigX[]}_t|_{\word 0} + \frac{1}{2} \bpsi^{\sigX[]}_t|_{\word{11}} + \frac{1}{2}(\bpsi^{\sigX[]}_t|_{\word 1})\shupow{2} = 0 \\
        \bpsi^{\sigX[]}_T = {}_{\sigX[]}|\bp.
\end{cases}    
\end{equation}
These ideas are made precise in the next section.

\section{Optimal control and value process}\label{section:oc}

In this section, we establish the representations of the optimal control and the value process as linear expansions in the signature of the controlled variable. Since our construction of the optimal control as the Föllmer drift relies on a change-of-measure argument, it is more convenient to work with the weak formulation of the stochastic optimal control problem.
Consider the canonical filtered probability space $(\Omega, \F, \mathbb{F}, \P)$, where
$$
\Omega = \{\omega \in C([0, T], \R)\colon\ \omega(0) = 0\},
$$
and let $X = (X_t)_{t \in [0, T]}$ be the coordinate process: $X_t(\omega) = \omega(t)$. Here $\P$ denotes Wiener measure, $\mathbb{F} = (\F_t)_{t \in [0, T]}$ is the augmented canonical filtration, and $\F = \F_T$.
We define the class of admissible weak controls as follows.

\begin{definition}
    An admissible weak control is a tuple
    $$
    \pi = (\Q, W^{\Q}, \alpha),
    $$
    such that $(\Omega, \F, {\mathbb{F}}, \Q)$ is a filtered probability space satisfying the usual conditions, $W^\Q$ is a $\Q$-Brownian motion, and $\alpha$ is ${\mathbb{F}}$-adapted with $\E^{\Q}[\int_0^T\alpha_t^2\,dt] < \infty$. For an admissible weak control $\pi$, the controlled variable $X^\alpha$  is defined by 
    \begin{equation}\label{eq:controlled_var_def_weak}
        X_t^\alpha := W^\Q_t + \int_0^t\alpha_s\, ds.
    \end{equation}
    The class of admissible controls is denoted by $\mathcal{A}$.
\end{definition}

In what follows, we use $\pi$ and $\alpha$ interchangeably.

{
Let $\bp \in T((\R^2))$ and define the following stochastic optimal control problem

\begin{equation}\label{eq:weak_primal_problem}
    \sup_{\alpha \in \mathcal{A}}J(\alpha) = \sup_{\alpha \in \mathcal{A}}\E^{\Q}\left[ - \frac12\int_0^T\alpha_t^2\,dt + \bracket{\bp}{\sigXahat[T]} \right].
\end{equation}

\begin{definition}
    The value process corresponding to \eqref{eq:weak_primal_problem} is defined by
    $$
    V_t^* = \esssup_{\alpha \in \mathcal{A}_{t, T}} \E^{\Q}\left[  - \frac12\int_t^T\alpha_s^2\,ds + \bracket{\bp}{\sigXahat[T]} \, \Bigg|\, \F_t\right],
    $$
    where $\mathcal{A}_{t, T}$ denotes the set of ${\mathbb{F}}$-adapted processes $\alpha$ on $[t, T]$ with $\E^{\Q}[\int_t^T\alpha_s^2\,ds] < \infty$.
\end{definition}
}

\subsection{Main result}

We now state our main theorem that establishes the existence of an optimal control with a feedback representation in the infinite dimensional time-augmented signature $\widehat {\mathbb X}^{\alpha^*}$ of the controlled variable. More precisely,  the value process and the optimal control admit a local signature expansion whose coefficients solve the recentered Riccati equation \eqref{eq:Riccati_recentered} on the extended tensor algebra.

\begin{theorem}
\label{thm:oc_sig_repr}
    Let $\bp \in \mathcal B\,$. Then, there exists a weak optimal control $\alpha^* \in \mathcal A$, i.e.~$J(\alpha^*)=\sup_{\alpha \in \mathcal A} J(\alpha)$. Furthermore, let $\varepsilon \in (0\,,1)\,$ and   $\sigma$ be a stopping time such that $\sigma \in [0,T]$ a.s. Then,  there exists a solution  ${{\bpsi}^{\sigXastarhat[\sigma]}}$ to the Riccati equation \eqref{eq:Riccati_recentered} with $\mathbb X =\sigXastarhat[\sigma] $ such that 
 the optimal value process $V^*$ and the optimal control $\alpha^*$ are given by
\begin{equation}\label{eq:value_control_local}
        V^*_t = \bracket{\bpsi_t^{\sigXastarhat[\sigma]}}{\sigXastarhat[\sigma, t]},\ t \in \llbracket \sigma, \tau_{\sigma} \rrbracket, \quad \text{and a.s.} \quad \alpha^*_t = \bracket{\bpsi_t^{\sigXastarhat[\sigma]}|_{\word 1}}{\sigXastarhat[\sigma, t]},\ \text{for a.e. $t \in \llbracket \sigma, \tau_{\sigma} \rrbracket\,$,}
    \end{equation}
    where
    \begin{equation}\label{eq:stopping_time_def}
        \tau_{\sigma} := \inf \left\{t \geq \sigma\,:\, \| \exp^{\shuprod}({\overline{\bpsi}_t^{\sigXastarhat[\sigma]}}) \|_{\sigXastarhat[\sigma,t]} \geq 2 - \varepsilon \right\} \wedge T\,.
    \end{equation}
We also  have $\tau_{\sigma} > \sigma$ almost surely.
\end{theorem}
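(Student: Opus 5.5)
We construct the optimal control as the Föllmer drift of the Gibbs measure and then identify it with the Riccati coefficients. On the canonical space with $\P$ the Wiener measure and $X$ the coordinate process, set $Z := \exp(\bracket{\bp}{\sigXhat[T]})$.

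\textbf{Step 1: existence via the Gibbs measure.} Since $\bp\in\mathcal B$, Theorem~\ref{thm:results_papier1} (point 1 with $\sigX[]=\emptyword$, $t=0$) gives $\E[Z]=\exp(\bpsi_0^{\emptyword})<\infty$. Define $d\Q^*:=Z/\E[Z]\,d\P$ and the martingale $D_t:=\E[Z\mid\F_t]/\E[Z]$. By martingale representation, $D_t=1+\int_0^t D_s\alpha^*_s\,dX_s$ for a progressively measurable $\alpha^*$. Girsanov then makes $W^{\Q^*}:=X-\int_0^\cdot\alpha^*_s\,ds$ a $\Q^*$-Brownian motion, so $X=X^{\alpha^*}$ under $\Q^*$.

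We then use the entropy identity
\[
H(\Q^*|\P)=\tfrac12\E^{\Q^*}\Big[\int_0^T(\alpha^*_s)^2\,ds\Big].
\]
The left side equals $\E^{\Q^*}[\log Z]-\log\E[Z]$, which is finite because $\bracket{\bp}{\sigXhat[T]}$ is bounded above (for $\bp\in\mathcal B$) and $\E[Z|\log Z|]<\infty$. So $\alpha^*$ is admissible and $J(\alpha^*)=\log \E[Z]$. For an arbitrary $\pi=(\Q,W^\Q,\alpha)$, the law of $X^\alpha$ under $\Q$ has relative entropy at most $\frac12\E^\Q\int\alpha^2$ with respect to Wiener measure. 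The Donsker--Varadhan duality then gives $J(\alpha)\le\log\E[Z]$, which is the Boué--Dupuis upper bound. Hence $\alpha^*$ is optimal.

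\textbf{Step 2: conditional version and value process.} The same argument, conditioned on $\F_t$, gives
\[
V^*_t=\log\E^{\P}\big[Z\mid\F_t\big]=\log D_t+\log\E[Z].
\]
Under $\P$, $X$ is Brownian, so by Chen's identity this equals $\log\E[\exp\bracket{\bp}{\sigX[]\otimes\sigW[t,T]}]$ evaluated at $\sigX[]=\sigXhat[t]$. The expectation is over an independent Brownian motion and uses the Markov property of the signature. By \eqref{eq:recentering_bracket}, $\bracket{\bp}{\sigXhat[\sigma]\otimes\sigXhat[\sigma,t]\otimes\sigW[t,T]}=\bracket{{}_{\sigXhat[\sigma]}|\bp}{\sigXhat[\sigma,t]\otimes\sigW[t,T]}$, and by Proposition~\ref{prop:left-proj-compatibility} the coefficient ${}_{\sigXhat[\sigma]}|\bp$ lies in $\mathcal B$.

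Apply Theorem~\ref{thm:results_papier1} pathwise with terminal condition ${}_{\mathbb X}|\bp$ and measurable dependence on $\mathbb X\in G$, then plug in $\mathbb X=\sigXhat[\sigma]$, which is $\F_\sigma$-measurable. This yields $\bpsi^{\sigXhat[\sigma]}$ and $V^*_t=\bracket{\bpsi^{\sigXhat[\sigma]}_t}{\sigXhat[\sigma,t]}$ on $\llbracket\sigma,\tau_\sigma\rrbracket$, where \eqref{eq:cv_condition} holds by the definition of $\tau_\sigma$.

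Measurable selection of the Riccati solution in $\mathbb X$ needs care. We would take the specific solution constructed in \cite{riccatifourierlaplace}, which is given by explicit recursion and hence measurable. Using the strong Markov property at the stopping time $\sigma$ is standard. Since $\Q^*\sim\P$ and paths coincide, all identities transfer to $\sigXastarhat$.

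\textbf{Step 3: the control.} Apply Itô's formula to $\bracket{\bpsi_t}{\sigXhat[\sigma,t]}$ using \cite[Theorem 3.5]{riccatifourierlaplace}. The $dX$-coefficient is $\bracket{\bpsi_t|_{\word1}}{\sigXhat[\sigma,t]}$, and the Riccati equation cancels the drift up to $-\frac12(\cdot)^2$. Comparing with $d\log D_t=\alpha^*_t\,dX_t-\frac12(\alpha^*_t)^2dt$ and using uniqueness of the semimartingale decomposition gives $\alpha^*_t=\bracket{\bpsi_t|_{\word1}}{\sigXhat[\sigma,t]}$ for a.e.\ $t$ on the interval.

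\textbf{Main obstacle.} The hard part is justifying the termwise Itô differentiation of an infinite series up to $\tau_\sigma$. We plan to stop additionally at the times when $M_{\bp}(\sigXhat[\sigma,t])$ exceeds a level $K$, and use the uniform bounds \eqref{eq:bound_psi_proj_v} on $\|\bpsi_t|_{\word v}\|$ for $\word v\in\{\word0,\word1,\word{11}\}$ to pass to the limit of truncations by dominated convergence for stochastic integrals, then let $K\to\infty$.

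Finally, $\tau_\sigma>\sigma$ a.s. At $t=\sigma$ we have $\sigXhat[\sigma,\sigma]=\emptyword$, so $\|\shuexp{\overline{\bpsi}}\|_{\emptyword}=1<2-\varepsilon$. Continuity of the path combined with point 2 of Theorem~\ref{thm:results_papier1} then keeps the condition valid on a small random interval after $\sigma$.
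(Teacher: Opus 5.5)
Your proposal is correct and follows essentially the paper's route. You use the F\"ollmer-drift construction of Lemma~\ref{lemma:generic_pathdep_control}, then Chen's identity, recentering and Proposition~\ref{prop:left-proj-compatibility} with Theorem~\ref{thm:results_papier1} for the value process, It\^o's formula with the Riccati drift cancellation compared against the backward representation to identify $\alpha^*$, and point~2 of Theorem~\ref{thm:results_papier1} with continuity of $t\mapsto M_{\bp^*_\sigma}(\widehat{\mathbb X}_{\sigma,t})$ for $\tau_\sigma>\sigma$. The only variation is that you import the Bou\'e--Dupuis upper bound (via the entropy bound for drifted Brownian motion and Donsker--Varadhan) and F\"ollmer's entropy identity as known facts, whereas the paper proves both directly through Girsanov localization, Fatou and Jensen, and a uniform-integrability argument.
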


\begin{sqremark}    Theorem~\ref{thm:oc_sig_repr} provides an explicit form of the functional $\Phi_t$ appearing in Lemma~\ref{lemma:generic_pathdep_control}, hence showing that the optimal control is closed-loop with an explicit signature-Markovian feedback functional, computable via the Riccati equation.
\end{sqremark}

{\begin{sqremark}
    As in Section~\ref{section:riccati_prelim}, the representation of the optimal control, as well as the value process is only local with finite convergence radius. In particular, when $\sigma = 0$, \eqref{eq:value_control_local} corresponds to the expansion around the trivial signature $\emptyword$, and the recentered Riccati equation reduces to the standard one given by \eqref{eq:Riccati_eq}. Global representation is then recovered via the recentering technique similar to that explained in the end of Section~\ref{section:riccati_prelim}. The dynamic recentering algorithm ensuring the convergence at each time $t$ is presented in Subsection~\ref{subsection:tracking}.
\end{sqremark}
}

{\begin{corollary}[Running reward]\label{C:running}
   Assume that the coefficient $\bp\in\mathcal{B}$ is decomposed as $\bp =\bm f\word{0} + \bm g $. The objective function can then be written as a sum of the running reward with coefficient $\bm f$ and the terminal reward with coefficient $\bm g$: 
    $$
    \sup_{\alpha\in\mathcal{A}}\,
\E^\Q\left[
-\frac{1}{2}\int_0^T \alpha_t^2\,dt
 + \int_0^T \langle \bm f, \widehat{\mathbb X}^{\alpha}_s  \rangle \, ds + \langle \bm g, \widehat{\mathbb X}^{\alpha}_T  \rangle 
\right].
$$ 
With the running reward made explicit, the value process is defined by
$$
\widetilde{V}_t^* = \esssup_{\alpha \in \mathcal{A}_{t, T}} \E^{\Q}\left[  - \frac12\int_t^T\alpha_s^2\,ds + \int_t^T\bracket{\bm f}{\sigXahat[s]}\,ds + \bracket{{\bm g}}{\sigXahat[T]} \, \Bigg|\, \F_t\right],
$$
and the optimal control has the representation \eqref{eq:value_control_local} with the coefficient $\bchi^{\sigX[]}$, {instead of  $\bpsi^{\sigX[]}$,} solving the Riccati equation with a source term:
\begin{equation}\label{eq:Riccati_recentered_source}
\begin{cases}
    \dot{\bm \chi}^{\sigX[]}_t + \bchi^{\sigX[]}_t|_{\word 0} + \frac{1}{2} \bchi^{\sigX[]}_t|_{\word{11}} + \frac{1}{2}(\bchi^{\sigX[]}_t|_{\word 1})\shupow{2} + ~_{\sigX[]}|{\bm f} = 0, \quad t \in [0, T]\\
    \bm \bchi_T^{\sigX[]} = ~_{\sigX[]}|{\bm g}.
\end{cases}
\end{equation}
The stopping time $\tau_\sigma$ is defined by \eqref{eq:stopping_time_def} with $\bpsi^{\sigX[]}$ replaced by $\bchi^{\sigX[]}$.
\end{corollary}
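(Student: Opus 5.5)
The plan is to reduce Corollary~\ref{C:running} to Theorem~\ref{thm:oc_sig_repr} applied to the single coefficient $\bp = \bm f\word{0} + \bm g \in \mathcal B$, and then to identify $\bchi^{\sigX[]}$ with $\bpsi^{\sigX[]}$ up to a signature-dependent shift.

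\textbf{Step 1: the running reward is a word-$\word 0$ functional.} By the definition of the time-augmented signature, the components of $\bm f\word 0$ satisfy
\[
\bracket{\bm f\word 0}{\sigXahat[T]} = \int_0^T \bracket{\bm f}{\sigXahat[s]}\,ds .
\]
Since $\bm f$ has finite degree, this is immediate componentwise. Hence the objective in the corollary coincides with $J(\alpha)$ in \eqref{eq:weak_primal_problem}. Existence of a weak optimal control therefore follows from Theorem~\ref{thm:oc_sig_repr}.

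\textbf{Step 2: relate the two value processes.} Splitting the integral at $t$ gives
\[
V^*_t = \widetilde V^*_t + \int_0^t \bracket{\bm f}{\sigXahat[s]}\,ds ,
\]
because the subtracted term is $\F_t$-measurable. Consequently the optimal control and the esssup are the same for both formulations. Theorem~\ref{thm:oc_sig_repr} then yields
\[
V^*_t = \bracket{\bpsi_t^{\sigX[]}}{\sigXahat[\sigma,t]} \quad \text{on } \llbracket\sigma,\tau_\sigma\rrbracket, \qquad \text{with } \sigX[] = \sigXastarhat[\sigma].
\]

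\textbf{Step 3: the ansatz for $\bchi$.} Define
\[
\bchi^{\sigX[]}_t := \bpsi^{\sigX[]}_t - {}_{\sigX[]}|(\bm f\word 0)\,.
\]
For any $\sigY[]$, Chen's identity and \eqref{eq:recentering_bracket} give
\[
\bracket{{}_{\sigX[]}|(\bm f\word 0)}{\sigY[]} = \int \bracket{\bm f}{\sigX[]\otimes \sigY[][s]}\,ds .
\]
Evaluated at $\sigY[]=\sigXahat[\sigma,t]$, this equals $\int_0^t \bracket{\bm f}{\sigXahat[s]}\,ds$. So $\bracket{\bchi_t^{\sigX[]}}{\sigXahat[\sigma,t]} = \widetilde V^*_t$, as desired.

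It remains to check that $\bchi^{\sigX[]}$ solves \eqref{eq:Riccati_recentered_source}. Write $\bm h := {}_{\sigX[]}|(\bm f\word 0)$. One has $\bm h = \bm h'\word 0$ with $\bm h' = {}_{\sigX[]}|\bm f$ (for $\sigX[]^{\emptyword}=1$, modulo the empty-word term, which must be tracked separately). The right shifts then satisfy
\[
\bm h|_{\word 0} = {}_{\sigX[]}|\bm f, \qquad \bm h|_{\word 1} = 0, \qquad \bm h|_{\word{11}} = 0 .
\]
Substituting $\bpsi = \bchi + \bm h$ into \eqref{eq:Riccati_recentered} produces exactly the source term ${}_{\sigX[]}|\bm f$. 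The terminal value is $\bchi_T = {}_{\sigX[]}|\bp - \bm h = {}_{\sigX[]}|\bm g$.

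For the control, $\bpsi|_{\word 1} = \bchi|_{\word 1}$. Hence $\alpha^*$ has the representation \eqref{eq:value_control_local} with $\bchi$ in place of $\bpsi$.

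\textbf{Step 4: the stopping time, which is the main obstacle.} The stopping time is stated with $\bchi$ in place of $\bpsi$, and $\|\exp^{\shuprod}(\overline{\bchi})\|$ differs from $\|\exp^{\shuprod}(\overline{\bpsi})\|$. On this point I would argue as follows. Because $\bm h$ has finite degree, $\bracket{\bm h}{\cdot}$ is a polynomial and converges everywhere. Using the shuffle compatibility \eqref{eq:shuffle_compatibility}, convergence of the $\bchi$-expansion is then equivalent to convergence of the $\bpsi$-expansion. The representation therefore holds up to the modified $\tau_\sigma$ once $\varepsilon$ is adjusted, or more directly by rerunning the argument of Theorem~\ref{thm:oc_sig_repr} with the source term.

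A further subtlety is the decomposition of the empty-word component in the identity ${}_{\sigX[]}|(\bm f\word 0)=({}_{\sigX[]}|\bm f)\word 0 + (\text{const})\emptyword$. This constant is time-independent, so it cancels in $\dot\bchi$ but does shift the terminal condition. I would handle it by checking that it vanishes: it equals $\bracket{\bm f\word0}{\sigX[]}$, which is precisely the past running reward subtracted in Step 2.
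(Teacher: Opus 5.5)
\paragraph{Steps 1--3.} These follow the paper's proof: the same relation $V^*_t=\widetilde V^*_t+\bracket{\bm f\word{0}}{\sigXastarhat[t]}$, the same ansatz $\bchi^{\mathbb X}_t=\bpsi^{\mathbb X}_t-{}_{\mathbb X}|(\bm f\word{0})$, the same verification of the source term, and $\bchi_t|_{\word 1}=\bpsi_t|_{\word 1}$ for the control.

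You should delete the closing ``further subtlety'', though. The constant $\bracket{\bm f\word{0}}{\mathbb X}=\int_0^\sigma\bracket{\bm f}{\sigXastarhat[s]}\,ds$ does not vanish, and it need not:
\begin{itemize}
\item It does not shift the terminal condition, because $\bchi_T={}_{\mathbb X}|\bp-{}_{\mathbb X}|(\bm f\word{0})={}_{\mathbb X}|\bm g$ by linearity of the left shift, as you wrote yourself in Step~3.
\item It is exactly the term that turns $\int_\sigma^t$ into $\int_0^t$ in your identity $\bracket{\bchi_t}{\sigXastarhat[\sigma,t]}=\widetilde V^*_t$. Your intermediate formula $\int\bracket{\bm f}{\mathbb X\otimes\mathbb Y_s}\,ds$ omits it.
\end{itemize}
If the constant vanished, that identity would be false.

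\paragraph{The gap is Step 4.} Everything you obtain for $\bchi$ is inherited from Theorem~\ref{thm:oc_sig_repr} applied to $\bpsi$. It is therefore only established up to the stopping time built from $\exp^{\shuprod}(\overline{\bpsi}_t)$, whereas the corollary asserts it up to the one built from $\exp^{\shuprod}(\overline{\bchi}_t)$. Your transfer argument does not bridge this, for two reasons.
\begin{itemize}
\item Finiteness of $\|\bchi_t\|_{\mathbb Y}$ and of $\|\bpsi_t\|_{\mathbb Y}$ are indeed equivalent, but finiteness is not what is needed. Theorem~\ref{thm:results_papier1} identifies the series with the log-Laplace transform only under \eqref{eq:cv_condition}.
\item The two shuffle-exponential conditions cannot be exchanged by adjusting $\varepsilon$. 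Since $\overline{\bpsi}_t=\overline{\bchi}_t+({}_{\mathbb X}|\bm f)\word{0}$, shuffle compatibility only gives $\|\exp^{\shuprod}(\overline{\bpsi}_t)\|_{\mathbb Y}\le\|\exp^{\shuprod}(\overline{\bchi}_t)\|_{\mathbb Y}\,\|\exp^{\shuprod}(({}_{\mathbb X}|\bm f)\word{0})\|_{\mathbb Y}$. The last factor is at least $1$ (its level-zero term is $1$) and is not controlled along the path, while the threshold $2$ is fixed.
\end{itemize}

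\paragraph{The missing idea.} The paper's route bypasses $\bpsi$ altogether. Right shifts by a letter are shuffle derivations, so $\tilde\bu_t:=\exp^{\shuprod}(\bchi_t)$ solves the linear equation
$\dot{\tilde\bu}_t+\tilde\bu_t|_{\word 0}+\frac12\tilde\bu_t|_{\word{11}}+({}_{\mathbb X}|\bm f)\shuprod\tilde\bu_t=0$, with $\tilde\bu_T=\exp^{\shuprod}({}_{\mathbb X}|\bm g)$.
Its pairing with $\mathbb Y$ is the entire Feynman--Kac functional $\E[\exp(\int_t^T\bracket{{}_{\mathbb X}|\bm f}{\mathbb Y\otimes\sigW[t,r]}\,dr+\bracket{{}_{\mathbb X}|\bm g}{\mathbb Y\otimes\sigW[t,T]})]$. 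So $\bchi_t$ is the shuffle logarithm of an element with everywhere-convergent pairing. The condition $\|\exp^{\shuprod}(\overline{\bchi}_t)\|_{\mathbb Y}\le 2-\varepsilon$ then yields $\widetilde V^*_t=\bracket{\bchi_t}{\mathbb Y}$ directly, as in the proof of Theorem~4.7 of \cite{riccatifourierlaplace}. It\^o's formula then gives the control as in the proof of Theorem~\ref{thm:oc_sig_repr}.

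Your fallback of ``rerunning the argument with the source term'' points this way, but it is the real content of the step. What must be redone is the log-Laplace result of Theorem~\ref{thm:results_papier1} for the equation with source, not merely Theorem~\ref{thm:oc_sig_repr}.
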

\begin{proof}
    Note that we have the following relationship between the value processes $V^*$ and $\widetilde{V}^*$ from Theorem~\ref{thm:oc_sig_repr}:
    \begin{equation}\label{eq:value_processes_link}
        V_t^* = \widetilde{V}^*_t + \int_0^t\bracket{{\bm f}}{\sigXastarhat[s]}\,ds = \widetilde{V}^*_t + \bracket{{\bm f}\word{0}}{\sigXastarhat[t]}.
    \end{equation}
    Set $\bchi_t^{\sigX[]} := \bpsi_t^{\sigX[]} - ~_{\sigX[]}|({\bm f}\word{0})$, where $\bpsi^{\sigX[]}$ satisfies \eqref{eq:Riccati_recentered}. Since $\bchi_T = \bpsi_T - ~_{\sigX[]}|({\bm f}\word{0}) = ~_{\sigX[]}|{\bm g}$, $\dot\bpsi_t = \dot\bchi_t$, and
    $$
    \bpsi^{\sigX[]}_t|_{\word 0} + \frac{1}{2} \bpsi^{\sigX[]}_t|_{\word{11}} + \frac{1}{2}(\bpsi^{\sigX[]}_t|_{\word 1})\shupow{2} = \bchi^{\sigX[]}_t|_{\word 0} + \frac{1}{2} \bchi^{\sigX[]}_t|_{\word{11}} + \frac{1}{2}(\bchi^{\sigX[]}_t|_{\word 1})\shupow{2} + ~_{\sigX[]}|{\bm f},
    $$
    \eqref{eq:Riccati_recentered_source} holds. Finally, we observe that 
    $$
    \widetilde{V}_t^* = \bracket{\bpsi^{\sigXastarhat[\sigma]}}{\sigXastarhat[\sigma, t]}- \bracket{{\bm f}\word{0}}{\sigXastarhat[t]} = \bracket{\bpsi^{\sigXastarhat[\sigma]} -~_{\sigXastarhat[\sigma]}|({\bm f}\word{0})}{\sigXastarhat[\sigma, t]} = \bracket{\bchi_t^{\sigXastarhat[\sigma]}}{\sigXastarhat[\sigma, t]}.
    $$
    The representations of the optimal control follow from the fact that $\bchi_t^{\sigX[]}\proj{1} = \bpsi_t^{\sigX[]}\proj{1}$.

    The convergence condition $ \| \exp^{\shuprod}({\overline{\bchi}_t^{\sigXastarhat[\sigma]}}) \|_{\sigXastarhat[\sigma,t]} \leq 2 - \varepsilon$ arises from the same principles as the one for $\bpsi_t^{\sigXastarhat[\sigma]}$: indeed, as in \cite[Remark 5.1]{riccatifourierlaplace}, one can show that $\tilde\bu_t= \exp^{\shuprod}({{\bchi}_t^{\sigXastarhat[\sigma]}})$ is a solution to a linear differential equation on the tensor algebra corresponding to an entire linear functional of the signature. The coefficient ${\bchi}_t^{\sigXastarhat[\sigma]}$ is hence defined as a shuffle logarithm of $\tilde\bu_t$, so that the condition $ \| \exp^{\shuprod}({\overline{\bchi}_t^{\sigXastarhat[\sigma]}}) \|_{\sigXhat[]} \leq 2 - \varepsilon$ ensures the convergence of $\bracket{\bchi_t^{\sigXastarhat[\sigma]}}{\sigXhat[]}$ via shuffle-compatibility, see the proof of \cite[Theorem 4.7]{riccatifourierlaplace}.
\end{proof}

\begin{remark}\label{rmk:timedep_running}
We expect our methodology to extend naturally to time-dependent coefficients, that is, by replacing the running reward $\langle {\bm f}, \widehat{\mathbb X}^{\alpha}_s \rangle$ in Corollary~\ref{C:running} with $\langle {\bm f}_s, \widehat{\mathbb X}^{\alpha}_s \rangle$, where $({\bm f}_s)_{s \in [0, T]}$ belongs to some reasonable class, for instance, continuous in time and of the form
$$
\bm f_s = -a\word{1}^{\otimes 2n} + \tilde{\bm f}_s, \quad a \geq 0,\quad n \in \N,\quad \deg_{\word{1}}(\tilde{\bm f}_s) < 2n.
$$
In this case, we conjecture that the optimal control retains the same feedback form as in \eqref{eq:value_control_local}, with $\bchi^{\sigX[]}$ solving the Riccati equation \eqref{eq:Riccati_recentered_source} with time-dependent source $(~_{\sigX[]}|{\bm f}_t)_{t\in[0,T]}$. Establishing this extension would require a corresponding generalization of the log-Laplace transform results of \cite*{riccatifourierlaplace} to the time-dependent setting. Such an extension could be particularly useful for representing certain non-Markovian models within our framework while reducing dimensional complexity, as it would avoid the need to expand in the time variable, for instance in the case of Volterra structures, see for instance Section~\ref{sect:volterra}  and \cite*[Section 4.3]{abi2024path}. Our formulas in the time-dependent case are validated numerically in Figure~\ref{fig:control_timedep_mu} below, which  supports our conjecture.
\end{remark}

\begin{sqremark}[{The Föllmer drift via Malliavin calculus for signatures}]
    In the signature control problem with the  terminal reward $\bracket{\bp}{\sigXahat[T]}$, $\bp \in \mathcal{B}$, the feedback control functional can be equivalently written in terms of expectations of functionals of the Brownian signature. Indeed, recalling the Clark--Ocone representation \eqref{eq:clark_ocone_repr} of the optimal control which follows from the proof of Lemma~\ref{lemma:generic_pathdep_control} below, and using the fact that the Malliavin derivative of the Brownian signature is given by
    $$
    \mathbf{D}_t\sigW[T] = \sigW\otimes\word{1}\otimes\sigW[t,T],
    $$
    as shown in \cite*[Corollary 4.7]{jaber2026malliavincalculussignaturesapplications}, we obtain that the value function and the optimal control are given by the following signature feedback maps:
\begin{equation}\label{eq:value_fct_control_mc}
        v(t, \sigXhat) = \log\E^{\Q^*}[\exp(\bracket{\bp}{\sigXhat\otimes\sigW[t, T]^{\Q^*}})], \qquad \alpha^*(t, \sigXhat) = \dfrac{\E^{\Q^*}[\exp(\bracket{\bp}{\sigXhat\otimes\sigW[t, T]^{\Q^*}})\bracket{\bp}{\sigXhat\otimes\word{1}\otimes\sigW[t,T]^{\Q^*}}]}{\E^{\Q^*}[\exp(\bracket{\bp}{\sigXhat\otimes\sigW[t, T]^{\Q^*}})]},
    \end{equation}
    reducing the simulation to that of the Brownian signature $\sigW[t, T]^{\Q^*}$ only. The disadvantage of this method is the necessity to run the Monte Carlo simulation \eqref{eq:value_fct_control_mc} at each time $t \in [0, T]$, while the Riccati approach of Theorem~\ref{thm:oc_sig_repr} allows one to solve the Riccati equation once at some $\sigma \in [0, T]$ to determine the coefficients $(\bpsi^{\sigXhat[\sigma]}_t)_{t \in [\sigma, T]}$, which can be used as long as the convergence condition is satisfied. The Monte Carlo approximations of the  representations \eqref{eq:value_fct_control_mc} are used as benchmarks in our numerical experiments in Figures~\ref{fig:control_constant_mu}-\ref{fig:control_timedep_mu} below.
\end{sqremark}

{\begin{corollary}[Markovian case]\label{C:markovian}
    Let $f, g:\mathbb R \to \mathbb R$  be polynomials
    $$ f(x) = \sum_{k=0}^M f_k \frac{x^k}{k!}, \qquad g(x) = \sum_{k=0}^N g_k \frac{x^k}{k!},$$
for some $f_k, g_k \in \mathbb R$, with even degrees $N, M$ and leading coefficients $f_M, g_N < 0$.  Then, for any $x_0 \in  \R$, there exists $r_{x_0}>0$ such that   the value  function
{\begin{align}\label{eq:markovianvalue}
\tilde v(t,x):=\sup_{\alpha\in\mathcal A_{t,T}}\mathbb E^{\mathbb Q}\left[
-\frac12\int_t^T\alpha_s^2 ds + \int_t^Tf(X_s^\alpha)ds+g(X_T^\alpha)\, \Bigg| \, X_t^\alpha = x
\right],    
\end{align}
and the optimal control map $\alpha^*(t,X^{\alpha_t^*}_t)$ admit the power series representations:
\begin{align}\label{eq:markovianalpha}
\tilde v(t,x)=\sum_{n\geq 0} \bchi^{x_0,n}_{t} \frac{(x-x_0)^n}{n!}, \quad \alpha^*(t,x)=\sum_{n\geq 0} \bchi^{x_0,n+1}_{t} \frac{(x-x_0)^n}{n!}, \quad |x-x_0| \leq r_{x_0},
\end{align}
}
where the family $t \mapsto \bchi_t^{x_0} = (\bchi^{x_0,n}_{t})_{n\geq 0}$ solves the infinite-dimensional Riccati equation 
{\begin{equation}\label{eq:Riccati_markovian}
\begin{cases}
\dot{\bchi}^{x_0, n}_t +\dfrac12\bchi^{{x_0}, n+2}_{t}
+\dfrac12\displaystyle\sum_{k=0}^n \binom{n}{k}\bchi^{x_0, k+1}_{{t}}\bchi_{t}^{x_0, n-k+1} + f^{(n)}(x_0)=0, \quad t \in [0, T], \quad n \geq 0,\\
\bchi_{T}^{{x_0}, n}=g^{(n)}(x_0),
\qquad n\geq 0,
\end{cases}
\end{equation}
}
where $g^{(n)}$ denotes the $n$-th derivative of $g$.
\end{corollary}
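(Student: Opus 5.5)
We derive the Markovian case as a special case of Corollary~\ref{C:running}, restricting to coefficients that involve only the letter $\word 1$ (apart from the trailing $\word 0$ carried by the running reward).

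\textbf{Step 1: signature encoding.} Set $\bm f := \sum_{k=0}^M f_k \word{1}\conpow{k}$ and $\bm g := \sum_{k=0}^N g_k \word{1}\conpow{k}$. Since $\bracket{\word 1\conpow k}{\sigXhat[s,t]} = (X_t-X_s)^k/k!$, we have $\bracket{\bm f}{\sigXahat[s]} = f(X^\alpha_s - X^\alpha_0)$, and similarly for $\bm g$. We then check that $\bp := \bm f\word 0 + \bm g$ belongs to $\mathcal B$. Writing $-g_N = a\,N!$ with $N = 2n$, we get $\bm g = -a(\word{1}\conpow{2n} + \bq) + c\emptyword$, where $\deg_{\word 0}(\bq) = 0$ and $\deg_{\word 1}(\bq) < 2n$. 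Likewise, with $M = 2m$, $\bm f\word 0 = -b(\word{1}\conpow{2m} + \br)\word 0$ with $\deg_{\word 1}(\br) < 2m$, and we take $\bm s = 0$. Because $a, b > 0$, Definition~\ref{def:class_B} applies. The conditioning on $X_t = x$ is handled by working on the canonical space started at $x_0$ and applying Corollary~\ref{C:running} from time $t$. Equivalently, we use a deterministic recentering at the signature $\sigX[] = \shuexp{x_0 \word 1}$, i.e.\ the signature of a jump from $0$ to $x_0$. This element is group-like with $\sigX[]^{\emptyword} = 1$, so Proposition~\ref{prop:left-proj-compatibility} keeps ${}_{\sigX[]}|\bp$ in $\mathcal B$.

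\textbf{Step 2: reduction to a one-letter Riccati system.} Using the definition of the left shift, we compute ${}_{\sigX[]}|\bm g = \sum_n g^{(n)}(x_0)\word 1\conpow n$, and the analogous formula holds for $\bm f$. This is the Taylor recentering of polynomials, as in \eqref{eq:recentering_bracket}. We then make the ansatz $\bchi_t = \sum_n \bchi^{x_0,n}_t \word 1\conpow n$ in \eqref{eq:Riccati_recentered_source}. For such elements we have $\bchi|_{\word 0} = 0$, $\bchi|_{\word 1} = \sum_n \bchi^{n+1}\word 1\conpow n$, and $\bchi|_{\word{11}} = \sum_n \bchi^{n+2}\word1\conpow n$. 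The shuffle identity $\word1\conpow k \shuprod \word 1\conpow l = \binom{k+l}{k}\word 1\conpow{k+l}$ turns $(\bchi|_{\word1})\shupow2$ into the binomial convolution. Hence \eqref{eq:Riccati_recentered_source} closes on the subspace spanned by $\{\word1\conpow n\}$ and reduces exactly to \eqref{eq:Riccati_markovian}. It remains to show that the solution supplied by Theorem~\ref{thm:results_papier1} / Corollary~\ref{C:running} actually lies in this subspace. We argue this through the identification with the log-Laplace transform: by the Boué--Dupuis formula, the log-Laplace transform at a signature depends only on its $\word1\conpow k$ components, namely through the increment of $X$. Alternatively, we solve the triangular-in-time-degree system and note that the $\word 0$-containing components satisfy a homogeneous system with zero data, which the construction in \citep{riccatifourierlaplace} (via the linear equation for $\shuexp{\bchi}$) leaves at zero.

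\textbf{Step 3: evaluation and radius.} Applying \eqref{eq:value_control_local} with $\sigma = t$ deterministic and $\bracket{\word1\conpow n}{\sigXhat[t,s]} = (x - x_0)^n / n!$, we obtain the two series in \eqref{eq:markovianalpha}. Convergence follows from point 2 of Theorem~\ref{thm:results_papier1}: the condition \eqref{eq:cv_condition} holds uniformly in $t$ on $G_{\bp,r}$. For one-letter increments, $M_{\bp}$ is controlled by $\max_{k \le \deg} |x - x_0|^k / k!$, so some $r_{x_0} > 0$ suffices. The control formula follows from $\bchi|_{\word1}$.

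\textbf{Main obstacle.} We expect the hardest step to be the invariance of the one-letter subspace, since uniqueness for the infinite Riccati system is not guaranteed by the stated results. The first approach to try is uniqueness of the log-Laplace expansion coefficients on a neighbourhood, where analyticity forces the coefficients to match.
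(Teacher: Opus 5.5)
Your route is the paper's. Its proof is the one-line remark that Corollary~\ref{C:running} applies with $\bm f=\sum_k f_k\word{1}\conpow{k}$ and $\bm g=\sum_k g_k\word{1}\conpow{k}$, because on one-letter coefficients left shifts act as derivatives and the shuffle product is the binomial (Cauchy) product. Your membership check and your reduction of \eqref{eq:Riccati_recentered_source} to \eqref{eq:Riccati_markovian} spell this out. (Take $a=-g_N$, not $-g_N/N!$, since $g_N$ already multiplies $x^N/N!$; only the sign matters.)

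One step is wrong as written: the recentering element. The signature of the jump from $0$ to $x_0$ is the tensor exponential $\mathbb{X}_{x_0}:=\sum_{k\geq 0}\frac{x_0^k}{k!}\word{1}\conpow{k}$, not $\shuexp{x_0\word 1}$. Since $\word{1}^{\shuprod k}=k!\,\word{1}\conpow{k}$, one has $\shuexp{x_0\word 1}=\sum_k x_0^k\word{1}\conpow{k}$. This element is not group-like, because $\bracket{\word 1}{\cdot}^2=x_0^2$ while $\bracket{\word 1\shuprod\word 1}{\cdot}=2x_0^2$. Its left shift also gives $\sum_j\bigl(\sum_k g_{j+k}x_0^k\bigr)\word{1}\conpow{j}$ rather than $\sum_j g^{(j)}(x_0)\word{1}\conpow{j}$. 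With $\mathbb{X}_{x_0}$ your formulas are correct.

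Step 3 also needs repair. The realized-path formula \eqref{eq:value_control_local} with $\sigma=t$ pairs $\bchi_s$ with the full random signature $\sigXastarhat[t,s]$, time letters included. It only describes $\tilde v$ along the optimal path up to $\tau_t$. To get the statement for every $|x-x_0|\le r_{x_0}$, identify $\tilde v(t,x)=\log\E[\exp(\int_t^T f(x+W_s-W_t)\,ds+g(x+W_T-W_t))]$ via Lemma~\ref{lemma:generic_pathdep_control}. Then evaluate Theorem~\ref{thm:results_papier1}(1), with terminal datum ${}_{\mathbb{X}_{x_0}}|\bp$, at the deterministic group-like elements $\mathbb{X}_y$ with $|y|$ small.

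The invariance you flag is genuinely needed, and the paper's one-line proof silently assumes it. Only $\bchi_t^{\word{1}\conpow{n}\word 0}$ enters the one-letter equations, so it suffices that $\bchi_t$ be supported on one-letter words.

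Your \emph{homogeneous system with zero data} option does not establish this. Zero being a solution says nothing about the constructed solution unless this infinite, non-triangular system has a uniqueness result, and none is available.

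Your other option does work. By Chen's identity and the shift computation, $\bracket{\bchi_t}{\mathbb{X}}=\tilde v(t,x_0+\mathbb{X}^{\word 1})$ for every group-like $\mathbb{X}$ in the ball of Theorem~\ref{thm:results_papier1}(2). This holds for $\bchi_t$ but not for $\bpsi_t$, which carries the extra term $\bracket{({}_{\mathbb{X}_{x_0}}|\bm f)\word 0}{\mathbb{X}}$; this also corrects your claim about which coefficient depends only on the $\word{1}\conpow{k}$ components. Then:
\begin{itemize}
\item Taking $\mathbb{X}=\mathbb{X}_y$ shows that $\tilde v(t,x_0+\cdot)$ is analytic with Taylor coefficients $\bchi_t^{\word{1}\conpow{n}}$.
\item Taking dilations $\delta_\lambda\mathbb{X}$ of arbitrary group-like $\mathbb{X}$ (level $n$ scaled by $\lambda^n$, still group-like and in the ball for small $|\lambda|$) turns both sides into convergent power series in $\lambda$.
\item Hence $\bchi_t$ and $\sum_n\bchi_t^{\word{1}\conpow{n}}\word{1}\conpow{n}$ agree on level $n$ of every group-like element. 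They therefore coincide, since these levels span $(\R^2)^{\otimes n}$.
\end{itemize}
This closes the gap rigorously.
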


\begin{proof}
 This is an immediate application of Corollary~\ref{C:running} with $\bm f = \sum_{k=0}^M f_k \word{1}^{\otimes k}$ and ${\bm g} = \sum_{k=0}^N g_k \word{1}^{\otimes k}$ and the fact that for power series, the shuffle product reduces to the Cauchy product and the shifts correspond to differentiations.
\end{proof}

\begin{remark}[Link with the HJB equation]\label{rmq:hjb}
In the Markovian case, with running reward $f(X_s^\alpha)$ and terminal reward $g(X_T^\alpha)$, the value function $\tilde v$ given by \eqref{eq:markovianvalue}
 solves the HJB equation
$$
\partial_t \tilde v(t,x)+\frac12\partial_{xx}\tilde v(t,x)+\frac12|\partial_x \tilde v(t,x)|^2+f(x)=0,
\qquad \tilde v(T,x)=g(x),
$$
and the optimal feedback control is given by $
\alpha_t^*=\partial_x \tilde v(t,X_t^{\alpha^*}).
$ Corollary \ref{C:markovian} shows  that in the case of polynomial  functions $f$ and $g$ the solution to the HJB equation above is analytic and admits a power series expansion around any $x_0 \in \R$ with time-dependent coefficients 
satisfying the infinite Riccati system. The optimal control \eqref{eq:markovianalpha}  then follows by differentiating $v(t, x)$ with respect to $x$. {Although efficient alternatives to the analytic series expansion of the HJB solution exist in the Markovian setting, the genuinely path-dependent case is fundamentally different. There, the value function satisfies a path-dependent HJB equation, and the analytic signature expansion provides a tractable means of approximating its solution, whereas a direct numerical solution of the path-dependent PDE is infeasible due to the infinite-dimensional state space.}
\end{remark}

{\begin{sqremark}[Linear-Quadratic case]\label{Ex:LQ}
When $N, M \leq 2$ in Corollary~\ref{C:markovian}, we have that $f(x) = f_0 + f_1x + f_2x^2/2$ and $g(x) = g_0 + g_1x + g_2x^2/2$, so that  the Riccati equation \eqref{eq:Riccati_markovian} is finite-dimensional:
\begin{equation*}
\begin{cases}
    \dot\bchi_t^0 + \dfrac{1}{2}\bchi_t^2 + \dfrac{1}{2}(\bchi_t^1)^2 + f(0) = 0, & \bchi_T^0 = g(0), \\
    \dot\bchi_t^1 + \bchi_t^1\bchi_t^2 + f'(0) = 0, & \bchi_T^1 = g'(0), \\
    \dot\bchi_t^2 + (\bchi_t^2)^2 + f''(0) = 0, &\bchi_T^2 = g''(0).
\end{cases}
\end{equation*}
We recover the standard LQ setting with a quadratic value function and linear optimal control maps:
$$
\widehat{v}(t, x) = \bchi_t^0 + \bchi_t^1 x + \bchi_t^2\frac{x^2}{2}, \qquad \alpha^*(t, x) = \bchi_t^1 + \bchi_t^2 x,
$$
which, of course, hold globally for all $x \in \R$.
\end{sqremark}
}
}

\subsection{Proof of the main result}
Before proceeding with the proof of Theorem~\ref{thm:oc_sig_repr}, we recall the construction of the optimal control as the Föllmer drift and derive the representation of the value process for the generic path-dependent control problem
$$
    \sup_{\alpha \in \mathcal{A}}J(\alpha) = \sup_{\alpha \in \mathcal{A}}\E^{\Q}\left[ - \frac12\int_0^T\alpha_t^2\,dt + F(X^\alpha) \right],
$$
where $F$ is some measurable real-valued function of $(X_t)_{t \in [0, T]}$. The construction of $\alpha^*$ follows the proof of \cite*[Theorem 2]{Lehec2013} verbatim until the step where we add an argument yielding the representation \eqref{eq:BSDE_charact}, starting~\eqref{eq:psi_process_ito} below. 
\begin{lemma}\label{lemma:generic_pathdep_control}
    Assume that $\E^\P[\exp({F(X)})] < \infty$ and $\E^\P[F^+(X)\exp({F(X)})] < \infty$. Then, there exists a weak control $\pi^* = (\Q^*, W^{\Q^*}, \alpha^*)$ with the controlled variable $X^{\alpha^*} = X$ which is a solution to the weak optimal control problem \eqref{eq:weak_primal_problem}, and
\begin{equation}\label{eq:optimality_eq}
        \sup_{\alpha \in \mathcal{A}}J(\alpha) = J(\alpha^*) = \log\E^\P[\exp({F(X)})].
    \end{equation}
    The value process $V^*$ is given by $V_t^* = \log\E^\P[\exp({F(X)})\,|\,\F_t]$ and admits the backward representation
\begin{equation}\label{eq:BSDE_charact}
        V_t^*  = F(X^{\alpha^*}) - \int_t^T\alpha_s^*\,dW_s^{\Q^*} - \dfrac12\int_t^T(\alpha_s^*)^2\,ds, \quad t \in [0, T].
    \end{equation}
    Moreover, the optimal control is closed-loop (adapted to the state): there exists a non-anticipative measurable functional $\Phi_t(\cdot)$ such that
    \begin{equation}\label{eq:feedback_functional}
        \alpha_t^* = \Phi_t(X^{\alpha^*}_s, \ s \in [0, t]).
    \end{equation}
\end{lemma}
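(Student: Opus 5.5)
We construct the optimal control as the Föllmer drift of the Gibbs measure and then verify optimality through a Girsanov and relative-entropy argument. Set $Z := \exp(F(X))/\E^\P[\exp(F(X))]$ and $d\Q^* := Z\,d\P$ on the canonical space. The martingale $M_t := \E^\P[\exp(F(X))\,|\,\F_t]$ is strictly positive and continuous, since it lives in the Brownian filtration. By the martingale representation theorem there is a progressively measurable $H$ with $dM_t = H_t\,dX_t$. We define
$$
\alpha^*_t := \frac{H_t}{M_t}, \qquad dM_t = M_t\,\alpha^*_t\,dX_t .
$$
Girsanov's theorem then shows that $W^{\Q^*}_t := X_t - \int_0^t \alpha^*_s\,ds$ is a $\Q^*$-Brownian motion, so $X = X^{\alpha^*}$. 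Because $\alpha^*$ is a progressively measurable functional of the canonical path, it is automatically of the form \eqref{eq:feedback_functional}, which gives the closed-loop claim.

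Admissibility comes from the entropy identity. Since $\log M$ satisfies $d\log M_t = \alpha^*_t\,dX_t - \tfrac12(\alpha^*_t)^2dt$, we have under $\Q^*$
$$
\log Z = \int_0^T \alpha^*_s\,dW^{\Q^*}_s + \frac12\int_0^T(\alpha^*_s)^2\,ds .
$$
The left side has finite $\Q^*$-expectation because $\E^\P[F^+e^F]<\infty$ and $x\log x$ is bounded below. A localization argument with stopping times $\rho_k$ at which the stochastic integral is a true martingale, combined with monotone convergence, then yields $\E^{\Q^*}[\int_0^T(\alpha^*_s)^2ds] = 2H(\Q^*|\P) < \infty$. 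This localization is the step that needs the most care. Taking expectations in the display gives $J(\alpha^*) = \E^{\Q^*}[F] - H(\Q^*|\P) = \log\E^\P[e^F]$.

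For the upper bound, take an arbitrary admissible weak control $(\Q,W^\Q,\alpha)$ and let $\mu$ denote the law of $X^\alpha$. Then $H(\mu|\P) \le H(\mathrm{Law}_\Q(X^\alpha, W^\Q)\,|\,\cdot\,) \le \tfrac12\E^\Q\int_0^T\alpha_s^2ds$, by Girsanov and data processing. The Donsker--Varadhan variational formula $\log\E^\P[e^F] = \sup_\mu\{\E^\mu[F] - H(\mu|\P)\}$ then gives $J(\alpha) \le \log\E^\P[e^F]$, which establishes \eqref{eq:optimality_eq}. The conditional versions of these two inequalities hold on $[t,T]$ given $\F_t$, applied to the regular conditional laws. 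Together they identify $V^*_t = \log M_t$, i.e. $V^*_t = \log\E^\P[e^{F}|\F_t]$.

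Finally, for \eqref{eq:BSDE_charact}, we apply Itô's formula to $\log M_t$ and rewrite $dX_t = \alpha^*_tdt + dW^{\Q^*}_t$:
$$
d\log M_t = \alpha^*_t\,dW^{\Q^*}_t + \frac12(\alpha^*_t)^2\,dt .
$$
Integrating from $t$ to $T$ and using $\log M_T = F(X)$ gives
$$
V^*_t = F(X^{\alpha^*}) - \int_t^T \alpha^*_s\,dW^{\Q^*}_s - \frac12\int_t^T (\alpha^*_s)^2\,ds ,
$$
which is exactly \eqref{eq:BSDE_charact}. This also recovers \eqref{eq:clark_ocone_repr} whenever $F$ is Malliavin differentiable.
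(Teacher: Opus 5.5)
Your construction of $\alpha^*$ follows the paper's proof: martingale representation of $M_t=\E^\P[e^{F(X)}\,|\,\F_t]$, Girsanov, and $X=X^{\alpha^*}$. So do the closed-loop property and the derivation of \eqref{eq:BSDE_charact} from It\^o's formula for $\log M$. Your admissibility step is the paper's argument in entropy form: the paper's $\E^{\Q^*}[V_T]-\log\E^\P[e^{F(X)}]$ is your $H(\Q^*|\P)$.

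The genuine difference is the upper bound $J(\alpha)\le\log\E^\P[e^{F(X)}]$. The paper stays on the original space. It localizes the possibly strict local martingale $N=\mathcal{E}(-\int_0^\cdot\alpha_s\,dW^\Q_s)$ and applies Girsanov to each probability measure $N_{T\wedge\sigma_n}\,d\Q$. Fatou then gives $\E^\Q[N_Te^{F(X^\alpha)}]\le\E^\P[e^{F(X)}]$, and Jensen concludes. You instead pass to the law $\mu$ of $X^\alpha$, bound $H(\mu|\P)\le\frac12\E^\Q\int_0^T\alpha_s^2\,ds$, and apply the Gibbs (Donsker--Varadhan) inequality. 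Your route is more conceptual and builds the entropy duality of the remark following the lemma directly into the proof. The paper's route is self-contained and needs no facts about relative entropy on path space.

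Two steps need more than you wrote, though both are standard to repair.

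First, under the sole assumption $\E^\Q\int_0^T\alpha_s^2\,ds<\infty$, $N$ may be a strict local martingale, so $N_T\,d\Q$ need not be a probability measure. Your data-processing bound, whose reference measure you leave unspecified, is therefore only directly available for the stopped drifts $\alpha\mathbf{1}_{[0,\sigma_n]}$. For these, $H(\mathrm{Law}_\Q(X^{\alpha,n})|\P)\le\frac12\E^\Q\int_0^{\sigma_n}\alpha_s^2\,ds$. You then need lower semicontinuity of $H(\cdot\,|\,\P)$, using that $X^{\alpha,n}=X^\alpha$ for $n$ large, to reach $H(\mu|\P)\le\frac12\E^\Q\int_0^T\alpha_s^2\,ds$. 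This is exactly where the paper's localization plus Fatou enters.

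Second, in the admissibility step, monotone convergence only controls the side $\int_0^{\rho_k}(\alpha_s^*)^2\,ds$. For the other side, set $Z_t:=M_t/M_0$ and use $\E^{\Q^*}[\log Z_{\rho_k}]=\E^\P[Z_{\rho_k}\log Z_{\rho_k}]\le\E^\P[Z_T\log Z_T]=H(\Q^*|\P)$, by conditional Jensen. This is the paper's $\Q^*$-submartingale property of $V$. It yields $\frac12\E^{\Q^*}\int_0^T(\alpha^*_s)^2\,ds\le H(\Q^*|\P)<\infty$. Equality then follows because $\int_0^\cdot\alpha^*_s\,dW^{\Q^*}_s$ is a true martingale.
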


\begin{proof}
    We first show that $\sup_{\alpha \in \mathcal{A}}J(\alpha) \leq \log Z$, where we denoted $Z:= \E^\P[e^{F(X)}]$. Consider an arbitrary admissible control $\pi = (\Q, W^{\Q}, \alpha)\in \mathcal A$ and define the measure $\Q'$ such that
    $$
    \dfrac{d\Q'}{d\Q} = \exp\left(-\int_0^T \alpha_s\, dW^{\Q}_s - \frac{1}{2}\int_0^T \alpha_s^2\, ds \right) =: N_T.
    $$
    Since the Doléans--Dade exponential $N$ on the right-hand side is a positive local martingale, it is a supermartingale, and we have $\E^{\Q}\left[\frac{d\Q'}{d\Q}\right] \leq 1$ and $\Q' \ll \Q$ (though it is not necessarily a probability measure). Localizing $N$ with $(\sigma_n)_{n \geq 1}$, we construct a sequence of probability measures $d\Q'_n = N_{T\land\sigma_n}\,d\Q$. It follows from Girsanov's theorem that 
    $$
    X_t^{\alpha, n} := W^{\Q}_t + \int_0^{t\land \sigma_n}\alpha_s\,ds \quad \text{is a $\Q'_n$-Brownian motion,}
    $$
    so that 
    $$
    Z = \E^{{\P}}[\exp({F(X)})] = \E^{\Q'_n}[\exp({F(X^{\alpha, n})})] = \E^{\Q}[N_{T \land \sigma_n}\exp({F(X^{\alpha, n})})].
    $$
    Since $X^{\alpha, n} = X^{\alpha}$ from some index $n \geq n^*(\omega)$ on, we have $N_{T \land \sigma_n}\exp({F(X^{\alpha, n})}) \to N_{T}\exp({F(X^{\alpha})})$ pointwise. By Fatou's lemma, we obtain
    \begin{equation}\label{eq:Z_inequality}
        \E^{\Q}[N_{T}\exp({F(X^{\alpha})})] \leq Z.
    \end{equation}
    Finally, it follows from \eqref{eq:Z_inequality} and Jensen's inequality that
    \begin{align}
        \log Z  &\geq \log{\E}^{\Q}\left[\exp\left(- \int_0^T\alpha_t\,dW^{\Q}_t - \dfrac12\int_0^T\alpha_t^2\,dt + F(X^\alpha) \right) \right] \geq {\E}^{\Q}\left[- \dfrac12\int_0^T\alpha_t^2\,dt + F(X^\alpha)\right] = J(\alpha),
    \end{align}
    where we used that $\E^{\Q}[\int_0^T\alpha_t^2\,dt] < \infty$ by the definition of admissibility.
    This proves $\sup_{\alpha \in \mathcal{A}}J(\alpha) \leq \log Z$.

    We now construct $\pi^*\in\mathcal{A}$ for which the optimal value $\log Z$ is attained.    We define a measure $\Q^*$ associated to the Radon--Nikodym derivative
    $$
    \dfrac{d\Q^*}{d\P} := \dfrac{\exp({F(X)})}{Z}, \qquad M_t := \E^\P\bigg[\dfrac{d\Q^*}{d\P}\,\bigg|\,\F_t\bigg].
    $$
    Note that $M$ is an a.s.~positive $\P$-martingale with $M_0 = 1$ and $M_T = \frac{e^{F(X)}}{Z}$. By the martingale representation theorem, there exists an $\mathbb{F}$-predictable process $h$ such that
    $
    \P\left(\int_0^Th_t^2\,dt < \infty \right) = 1,
    $
    and
    $$
    M_t = 1 + \int_0^t h_s\,dX_s = 1 + \int_0^t \alpha_s^* M_s\,dX_s, \quad t \in [0, T],
    $$
    where we defined
    \begin{equation}\label{eq:alpha_star_def}
        \alpha^*_t := \dfrac{h_t}{M_t}\indic{M_t > 0}.
    \end{equation}
    The process $\alpha^*$ is adapted and verifies 
    $
    \P\left(\int_0^T(\alpha_t^*)^2\,dt < \infty \right) = 1,
    $
    see the proof of \cite*[Theorem 2]{Lehec2013} for details. Therefore, $M = \exp(\int_0^{\cdot} \alpha_s^* dX_s - \frac12 \int_0^{\cdot} (\alpha_s^*)^2 ds)\,$. Since $M$ is also a true martingale, Girsanov's theorem applies: the process
    $$
    W^{\Q^*}_t := X_t - \int_0^t\alpha_s^*\,ds \quad \text{is a $\Q^*$-Brownian motion.}
    $$
    Hence, the optimal control candidate is $\pi^* = (\Q^*, W^{\Q^*}, \alpha^*)$ with the corresponding controlled variable $X^{\alpha^*} = W^{\Q^*} + \int_0^\cdot\alpha_s\,ds = X$ coinciding with the coordinate process.
    
    We define the process $V_t := \log\E^\P[\exp({F(X)})\,|\,\F_t]$. We have
    \begin{equation}\label{eq:psi_process_ito}
    \begin{aligned}
        V_t = \log(Z M_t) = \log(Z) + \int_0^t \alpha^*_s dW_s^{\Q^*} + \frac{1}{2} \int_0^t (\alpha_s^*)^2 ds, \quad t \in [0, T],
    \end{aligned}    
    \end{equation}
    with $V_T = F(X) = F(X^{\alpha^*})\,$. This yields \eqref{eq:BSDE_charact}.
    Note that since $M$ is a positive $\P$-martingale, $M^{-1}$ is a $\Q^*$-martingale, so that $V_t = \log Z-\log M_t^{-1}$ is a $\Q^*$-submartingale.
    Localizing $\alpha^*$ with $(\tau_n)_{n \geq 1}$ and taking the expectation $\E^{\Q^*}$, we obtain
    \begin{equation}\label{eq:psi_localization}
        \E^{\Q^*}\left[\dfrac12\int_0^{T\land\tau_n}(\alpha_s^*)^2\,ds\right] = \E^{\Q^*}\left[ V_{T \land \tau_n} \right] -\log Z = \E^{\Q^*}\left[ V^+_{T \land \tau_n} \right] - \E^{\Q^*}\left[V^-_{T \land \tau_n} \right] - \log Z.
    \end{equation}
    The left-hand side converges to $\E^{\Q^*}\left[\frac12\int_0^{T}(\alpha_s^*)^2\,ds\right]$ by the monotone convergence theorem. For the negative part of $V_{T \land \tau_n}$, we have
    $$
    \E^{\Q^*}\left[ V^-_{T \land \tau_n} \right] = \E^{\P}\left[M_{{T \land \tau_n}} \log^- (ZM_{T \land \tau_n}) \right].
    $$
    The integrand on the right-hand side is a.s. bounded from above uniformly in $n \geq 1$ by
    $$
    M_{{T \land \tau_n}} \log^- (ZM_{T \land \tau_n}) = -M_{{T \land \tau_n}} \log(ZM_{T \land \tau_n})\indic{ZM_{T \land \tau_n} < 1} \leq Z^{-1}\exp(-1)
    $$
    since $-x\log(x) \leq \exp(-1)$ on $(0, 1]$. Hence, by the dominated convergence theorem, $\E^{\Q^*}\left[ V_{T \land \tau_n}^- \right]$ converges to $\E^{\P}[M_T \log^-(ZM_T)] = \E^{\Q^*}\left[ V_{T}^- \right] < \infty$ as $n \to +\infty$. For the positive part, we use the submartingale property of $V$: $V_{T\land\tau_n} \leq \E^{\Q^*}\left[ V_{T} \,|\, \F_{T\land\tau_n} \right]$, so that 
    $$
    V^+_{T\land\tau_n} \leq \E^{\Q^*}\left[ V_{T} \,|\, \F_{T\land\tau_n} \right]^+ \leq \E^{\Q^*}\left[ V^+_{T} \,|\, \F_{T\land\tau_n} \right] = \E^{\Q^*}\left[F^+(X) \,|\, \F_{T\land\tau_n} \right].
    $$
    As $\E^{\Q^*}[F^+(X)] = \E^{\P}[F^+(X)\exp({F(X)})] < \infty$ by assumption, the right-hand side is $\Q^*$-uniformly integrable and hence $(V^+_{T\land\tau_n})_{n \geq 1}$ is $\Q^*$-uniformly integrable as well. This implies $\E^{\Q^*}\left[ V^+_{T \land \tau_n} \right] \to \E^{\Q^*}\left[ V^+_{T} \right] < \infty$ as $n \to +\infty$. Passing to the limit in \eqref{eq:psi_localization}, we conclude
    \begin{equation}\label{eq:relative_entropy}
         \E^{\Q^*}\left[\dfrac12\int_0^{T}(\alpha_s^*)^2\,ds\right] = \E^{\Q^*}\left[ V_{T} \right] -\log Z = \E^{\Q^*}\left[ F(X) \right] - \log Z < \infty.
    \end{equation}
    In particular, this implies 
    \begin{equation}\label{eq:optimality_eq_proof}
        J(\alpha^*) = \E^{\Q^*}\left[-\frac12\int_0^{T}(\alpha_s^*)^2\,ds + F(X)\right] = \log Z,
    \end{equation}
    which yields the second equality in \eqref{eq:optimality_eq}.

    We now identify the value process. Repeating the argument above with conditional expectations in place of expectations and the conditional Girsanov transform in place of Girsanov's theorem, one obtains
    $$
    {\E}^{\Q}\left[- \dfrac12\int_t^T\alpha_s^2\,ds + F(X^\alpha) \,\Bigg|\, {\F}_t\right] \leq \log\E^{\P}\left[\exp({F(X)}) \,\big|\, \F_t\right], \quad a.s.,
    $$
    for all $\pi = (\Q, W_s^{\Q}, \alpha)\in \mathcal A_{t, T}$, and
    $$
    {\E}^{\Q^*}\left[- \dfrac12\int_t^T(\alpha_s^*)^2\,ds + F(X) \,\Bigg|\, \F_t\right] = \log\E^{\P}\left[\exp({F(X)})\,\big|\, \F_t\right], \quad a.s.
    $$
    By the definition of the essential supremum, $V_t^* = \log\E^{\P}\left[\exp({F(X)})\,\big|\,\F_t\right] = V_t$. The backward representation \eqref{eq:BSDE_charact} follows from \eqref{eq:psi_process_ito}.

    Finally, \eqref{eq:feedback_functional} follows from the fact that $\alpha^*$ is $\mathbb{F}$-predictable by construction and that $\mathbb{F}$ is generated by the canonical process $X = X^{\alpha^*}$.
\end{proof}

\begin{sqremark}\label{cor:entropic_duality}
    The measure $\Q^*$ constructed in the proof of Lemma~\ref{lemma:generic_pathdep_control} is the unique maximizer in the relative entropy duality
    \begin{equation}\label{eq:entropy_duality}
        \sup_{\Q\ll\P}\left\{\E^{\Q}[F(X)] - H(\Q \,|\, \P)\right\} = \log \E^\P[\exp({F(X)})].
    \end{equation}
    Moreover, Theorem \ref{thm:oc_sig_repr} makes the measure $\Q^*$ simulable in explicit feedback form via the Riccati equation.
    Indeed, by construction, $\Q^* \sim \P$, and the relative entropy
    $$
    H(\Q^* \,|\, \P)  = \E^{\Q^*}\left[\log\left(\frac{d\Q^*}{d\P}\right)\right] = \E^{\Q^*}\left[ V^*_{T} \right] - \log \E^\P[\exp({F(X)})] = \E^{\Q^*}\left[\dfrac12\int_0^{T}(\alpha_s^*)^2\,ds\right] < \infty,
    $$
    by \eqref{eq:relative_entropy}, so that 
    $$
    \E^{\Q^*}[F(X)] - H(\Q^* \,|\, \P) = \E^{\Q^*}\left[-\dfrac12\int_0^{T}(\alpha_s^*)^2\,ds+ F(X)\right] = \log \E^\P[\exp({F(X)})],
    $$
    by \eqref{eq:optimality_eq}.
    The uniqueness of the maximizer follows from the strict concavity of the objective functional in \eqref{eq:entropy_duality}.
\end{sqremark}

\begin{proof}[Proof of Theorem~\ref{thm:oc_sig_repr}]
    We recall that $X^{\alpha^*} = X$ is a $\P$-Brownian motion. Firstly, we easily check that $F(X) = \bracket{\bp}{\sigXhat[T]}$ satisfies the assumptions of Lemma \ref{lemma:generic_pathdep_control}. Indeed, since $X$ is Brownian motion under the Wiener measure $\mathbb P\,$, we know from \cite*[Proposition 4.3]{riccatifourierlaplace} that $\bracket{\bp}{\sigXhat[T]} \leq C_T$ a.s. for some constant $C_T \geq 0\,$. This proves that $\E^{\P}[\exp({F(X)})]$ and $\E^{\P}[F^+(X) \exp({F(X)})]$ are finite.

    For readability, we denote ${}_{\sigXastarhat[\sigma]}|\bp$ by $\bp_{\sigma}^*\,$, and $\bpsi_t^{\sigXastarhat[\sigma]}$ by $\bpsi^*_{\sigma,t}\,$.
    The fact that $\tau_{\sigma} > \sigma$ a.s. comes from the first point in Theorem \ref{thm:results_papier1}, which gives $r = r(\bp_{\sigma}^*, T, \varepsilon) > 0$ such that $\| \shuexp{\overline{\bpsi^*_{\sigma,t}}} \|_{\sigX[]} \leq 2 - \varepsilon$ for any $\sigX[] \in G_{\bp^*_{\sigma}, r}\,$, i.e., satisfying $M_{\bp_{\sigma}^*}(\sigX[]) \leq r\,$. Hence, defining $\tilde{\tau}_{\sigma} := \inf\{t \geq \sigma\,:\, M_{\bp^*_{\sigma}}(\sigXastarhat[\sigma,t]) \geq r \}\,$, we have $\tau_{\sigma} \geq \tilde{\tau}_{\sigma}$ almost surely. Since $M_{\bp_{\sigma}^*}(\sigXastarhat[\sigma,\sigma]) = M_{\bp_{\sigma}^*}(\emptyword) = 0\,$, and $t \mapsto M_{\bp^*_{\sigma}}(\sigXastarhat[\sigma,t])$ is continuous, we have $\tilde{\tau}_{\sigma}> \sigma$ almost surely, thereby proving $\tau_{\sigma} > \sigma\,$.

    We now aim to identify the value process and the optimal control on $\llbracket \sigma\,, \tau_{\sigma} \rrbracket\,$. By Theorem \ref{thm:results_papier1} and the definition of $\tau_{\sigma}\,$, the value process can be written as 
    \begin{align*}
    V^*_t \,\indic{t \in \llbracket \sigma, \tau_{\sigma}\rrbracket} &= \log \E^{\P}[\exp(\bracket{\bp}{\sigXastarhat[T]})\,|\,\mathcal F_t]\,\indic{t \in \llbracket \sigma, \tau_{\sigma}\rrbracket} \\
    &= \log \E^{\P}[\exp(\bracket{\bp}{\sigXastarhat[\sigma] \otimes \sigXastarhat[\sigma,t] \otimes \sigXastarhat[t,T]})\,|\,\mathcal F_t]\,\indic{t \in \llbracket \sigma, \tau_{\sigma}\rrbracket} \\ 
    &= \log \E^{\P}[\exp(\bracket{\bp_{\sigma}^*}{\sigXastarhat[\sigma,t] \otimes \sigXastarhat[t,T]})\,|\,\mathcal F_t]\,\indic{t \in \llbracket \sigma, \tau_{\sigma}\rrbracket} \\
    &= \bracket{\bpsi^*_{\sigma,t}}{\sigXastarhat[\sigma,t]}\,\indic{t \in \llbracket \sigma, \tau_{\sigma}\rrbracket}\,, \quad t \in [0\,,T]\,,
    \end{align*}
    where we used Chen's identity \eqref{eq:chen}, the recentering \eqref{eq:recentering_bracket}, as well as the fact that $\bp^*_{\sigma} \in \mathcal B$ almost surely thanks to Proposition~\ref{prop:left-proj-compatibility}. This identifies the value process. The bounds \eqref{eq:bound_psi_proj_v} show that
    $$
    \int_{\sigma}^{t \wedge \tau_{\sigma}} \|\bpsi^*_{\sigma,s}|_{\wv}\|_{\sigXastarhat[\sigma,s]} ds \leq \int_{\sigma}^{t \wedge \tau_{\sigma}} f_{T, \varepsilon, \wv}(\bp^*_{\sigma}, s, M_{\bp^*_{\sigma}}(\sigXastarhat[\sigma,s]))ds\,, \quad t \in [0, T]\,,
    $$
    which is finite by continuity of $f$ in its second and third arguments, and by continuity of $s \mapsto M_{\bp_{\sigma}^*}(\sigXastarhat[\sigma,s])\,$. The same can be done for the time derivative $\dot{\bpsi}^*_{\sigma, \cdot}$ which can be expressed using the shifts of $\dot{\bpsi}^*_{\sigma, \cdot}$ by virtue of the Riccati equation $\dot{\bpsi}^*_{\sigma, \cdot} = - \bpsi^*_{\sigma, \cdot}|_{\word 0} - \frac{1}{2} \bpsi^*_{\sigma,\cdot}|_{\word{11}} - \frac{1}{2} (\bpsi^*_{\sigma,\cdot}|_{\word 1})\shupow{2}$, using the shuffle compatibility \eqref{eq:shuffle_compatibility} of the seminorm $\|\cdot \|_{\sigX[]}\,$. We can therefore apply It\^o's formula to the process $(V^*_t)_{t \in \llbracket\sigma, \tau_{\sigma}\rrbracket} = (\bracket{\bpsi^*_{\sigma,t}}{\sigXastarhat[\sigma,t]})_{t \in \llbracket \sigma, \tau_{\sigma}\rrbracket}\,$, see \cite*[Theorem 3.5]{riccatifourierlaplace}, yielding 
    \begin{align*}
        V^*_{t} &= V^*_{\sigma} + \int_{ \sigma}^{t \wedge \tau_{\sigma}} \bracket{\dot{\bpsi^*}_{\sigma,s} + \bpsi^*_{\sigma,s}|_{\word 0} + \frac{1}{2} \bpsi^*_{\sigma,s}|_{\word{11}}}{\sigXastarhat[\sigma,s]}\, ds + \int_{\sigma}^{t \wedge \tau_{\sigma}} \bracket{\bpsi^*_{\sigma,s}|_{\word 1}}{\sigXastarhat[\sigma,s]} \, dX^{\alpha^*}_s \\
        &= V^*_{\sigma} - \frac{1}{2}\int_{\sigma}^{t \wedge \tau_{\sigma}} \bracket{\bpsi^*_{\sigma,s}|_{\word 1}}{\sigXastarhat[\sigma,s]}^2\, ds + \int_{\sigma}^{t \wedge \tau_{\sigma}} \bracket{\bpsi^*_{\sigma,s}|_{\word 1}}{\sigXastarhat[\sigma,s]}\, dX^{\alpha^*}_s\,, \quad t \in \llbracket \sigma\,, T\rrbracket\,,
    \end{align*}
    where we used the Riccati equation satisfied by $\bpsi^*_{\sigma, \cdot}$ to simplify the drift. By \eqref{eq:BSDE_charact}, 
    \begin{align*}
        dV^*_t = \frac{1}{2} (\alpha^*_t)^2 dt + \alpha^*_t dW_t^{\Q^*} = - \frac{1}{2} (\alpha^*_t)^2 dt + \alpha^*_t dX^{\alpha^*}_t \,.
    \end{align*}
    Therefore,  the processes $(\alpha^*_t)_{t \in \llbracket \sigma , \tau_{\sigma} \rrbracket}$ and $(\bracket{\bpsi^*_{\sigma, t}|_{\word 1}}{\sigXastarhat[\sigma,t]})_{t \in \llbracket \sigma, \tau_{\sigma} \rrbracket}$ coincide $dt \otimes d\P$-a.e., which concludes the proof.
\end{proof}

\section{Applications}\label{section:application}
{In this section, we consider several applications covered by Theorem~\ref{thm:oc_sig_repr} and discuss some numerical aspects of its implementation. 

\subsection{Tracking linear functionals of the signature}\label{subsection:tracking}
    In the first application, we consider $K \in \N$ processes $Y^k, k = 1, \ldots, K,$ which depend on the path of the controlled variable and are represented by the linear functionals 
    $$
    Y_t^k = Y_t^k((X_s^\alpha)_{s \in [0, t]}) = \bracket{\bell_k}{\sigXahat}, \quad t \in [0, T], \quad k = 1, \ldots, K,
    $$
    for some $\bell_k \in T((\R^2))$ with $\deg(\bell_k) = N_k \in \N$ and $\bell_k^{\word{1}\conpow{N_k}} \neq 0$. The control problem consists of maintaining each process $Y^k$ close to a target value $\mu_k \in \R$ with an  objective functional of the form
    \begin{equation}\label{eq:tracking_example}
        J(\alpha) = \E^\Q\left[-\dfrac12\int_0^T\alpha_t^2\,dt - \sum_{k=1}^K\int_0^T(Y_t^k - \mu_k)^{2m_k}\,dt + \bracket{{\bm g}}{\sigXahat[T]}\right],
    \end{equation}
    for some $m_k \in \N,\ k = 1, \ldots, K$ and the terminal reward coefficient ${\bm g} \in T((\R^2))$ such that 
\begin{equation}\label{eq:coef_condition_tracking}
        \left(-\sum_{k=1}^K(\bell_k - \mu_k\emptyword)\shupow{2m_k}\right)\word{0} + {\bm g} \in \mathcal{B}.
    \end{equation}
    
 For such coefficients, Theorem~\ref{thm:oc_sig_repr} and Corollary~\ref{C:running} can be applied to solve the maximization problem of \eqref{eq:tracking_example}.  This is the case for instance  for ${\bm g} = 0$.

    For the numerical illustration, we consider the  particular case of \eqref{eq:tracking_example} given by \begin{equation}\label{eq:objective_numerical_illustr}
        J(\alpha) = \E^\Q\left[-\dfrac12\int_0^T\alpha_t^2\,dt  -w_{{\bm f}}\int_0^T(X_t^\alpha - \mu)^{4}\,dt - w_{{\bm g}}\sum_{|\word{v}| \leq 2}(\sigXhat[T]^{\alpha, \word{v}} - \sigYhat[T]^{\word{v}})^2\right],
    \end{equation}
    where $\mu \in \R$, $w_{\bm f} > 0,\ w_{\bm g} \geq 0$, and $\sigYhat[]$ denotes the time-augmented signature of a deterministic function $t \mapsto Y(t)$. This corresponds to \eqref{eq:tracking_example}  for the choice  $K=1$, $\bell_1=\word 1$, $m_1 = 2$ and $\bm g$ as in \eqref{eq:bfbg} below.
     A straightforward verification of the definition shows that the coefficient corresponding to this objective functional belongs to the class $\mathcal{B}$.
    Using the decomposition into running and terminal reward from Corollary~\ref{C:running}, we write
    \begin{align}\label{eq:bfbg}
    {\bm f} = -w_{\bm f}(\word{1} - \mu\emptyword)\shupow{4}, \qquad {\bm g} = -w_{\bm g}\sum_{|\word{v}| \leq 2}(\word{v} - \sigYhat[T]^{\word{v}} \emptyword)\shupow{2},
    \end{align}
    and solve the Riccati equation \eqref{eq:Riccati_recentered_source} with source ${\bm f}$ to obtain $\bchi^{\sigX[]}$. The value process and optimal control are then given by \eqref{eq:value_control_local} with $\bpsi^{\sigX[]} = \bchi^{\sigX[]} + ~_{\sigX[]}|({\bm f}\word{0})$. By Corollary~\ref{C:running}, in order to ensure the convergence of the representation \eqref{eq:value_control_local} after recentering at some time $\tau$, one must verify that $\|{\exp^{\shuprod}{(\overline{\bchi}_t^{\sigXastarhat[\tau]})}}\|_{\sigXastarhat[\tau, t]} <  (2 - \varepsilon).$ 
    Thus, the dynamic recentering procedure can be summarized as follows:
    \begin{enumerate}
        \item At $t=0$, set $\tau_0 = 0$.
        \item For $n \geq 0$, at the time $\tau_n$ solve numerically the recentered Riccati equation \eqref{eq:Riccati_recentered_source} with $\sigX[] = \sigXastarhat[\tau_n]$ to obtain the coefficients 
        $(\bchi_t^{\sigXastarhat[\tau_n]})_{ t \in \llbracket \tau_n, T\rrbracket}$.
        \item Use the value function and the optimal control representation
        $$
        V_t^* = \bracket{\bchi^{\sigXastarhat[\tau_n]}_t}{\sigXastarhat[\tau_n, t]} + \int_0^t\bracket{{\bm f}}{\sigXastarhat[s]}\,ds, \qquad \alpha^*_t =  \bracket{\bchi_t^{\sigXastarhat[\tau_n]}\proj{1}}{\sigXastarhat[\tau_n,t]}, \qquad t \in \llbracket \tau_n, \tau_{n+1}\rrbracket,
        $$
        up to the next recentering time 
        \begin{equation}\label{eq:recentering_rule}
            \tau_{n+1} = \inf\{t > \tau_n\colon\ \|{\exp^{\shuprod}{(\overline{\bchi}_t^{\sigXastarhat[\tau_n]})}}\|_{\sigXastarhat[\tau_n, t]}  \geq  2 - \varepsilon \}.
        \end{equation}
    \end{enumerate}

We apply this algorithm to the control problem corresponding to the objective functional \eqref{eq:objective_numerical_illustr} with the parameters 
$$
\mu = 1.4, \qquad w_{{\bm f}} = w_{{\bm g}} =  5,\qquad  Y(t) = 1.4 \cdot \left(\frac{2t}{T} \land 1\right),
$$
over the time horizon $T = 0.4$. We take $\varepsilon = 0.01$ in \eqref{eq:recentering_rule}.

The Riccati equation \eqref{eq:Riccati_recentered_source} truncated at the signature level $N_{\mathrm{trunc}} = 10$ is then solved on the uniform grid with $n_{\mathrm{t}} = 300$ time steps using the predictor-corrector scheme for ODEs. Figure~\ref{fig:control_constant_mu} shows two solutions of the optimal control problem obtained via the Riccati equation-based algorithm described above (solid lines) and the Monte Carlo benchmark \eqref{eq:value_fct_control_mc} with $5\cdot 10^5$ samples of the Brownian signature. To highlight the importance of the recentering procedure, we also provide the trajectories (dashed) corresponding to the control obtained via solving the Riccati equation once at $t = 0$ and using the solution $\bchi$ to construct the control and the value process without recentering as
\begin{equation}\label{eq:Riccati_no_recentering}
    V_t^* \approx \bracket{\bchi_t}{\sigXastarhat[t]} + \int_0^t\bracket{{\bm f}}{\sigXastarhat[s]}\,ds, \qquad \alpha^*_t \approx \bracket{\bchi_t\proj{1}}{\sigXastarhat[t]}, \qquad t \in [0, T].
\end{equation}

\begin{figure}[H]
    \centering
    \includegraphics[width=\linewidth]{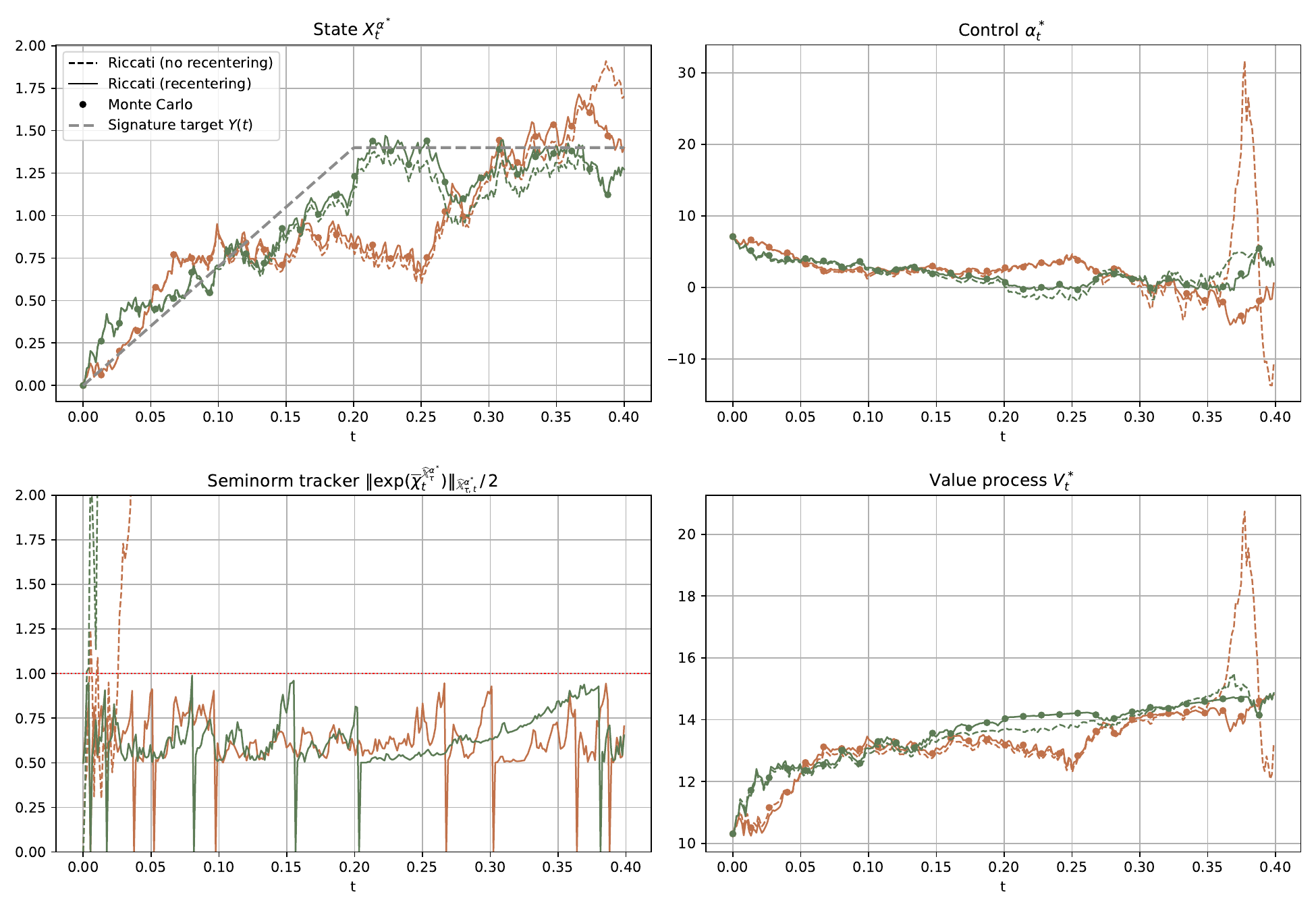}
    \label{fig:psi_series}
    \caption{Two realizations (green and brown) of the solution to the stochastic optimal control problem \eqref{eq:objective_numerical_illustr} obtained via solving the Riccati equation with / without recentering (solid / dashed) and via the Monte Carlo estimation of \eqref{eq:value_fct_control_mc} (dotted). The upper left plot shows the state variable $X^\alpha$ constructed using the control given by each of the three methods and the signature target function $Y(t)$ (dashed gray), the upper and lower right plots show the trajectories of the optimal control and the value process evaluated by each method, and the lower left plot shows the trajectories of the seminorm tracker.}
    \label{fig:control_constant_mu}
\end{figure}

Providing accurate results for moderate values of $t$, the uncentered Riccati expansion \eqref{eq:Riccati_no_recentering} demonstrates a significant deviation from both the optimal control and the optimal trajectory of the controlled variable for larger values of $t$ when the signature $\sigXastarhat[t]$ moves far away from the expansion origin $\emptyword$ in the tensor algebra, as indicated by the seminorm tracker
$
t \mapsto \frac{1}{2}\|{\exp^{\shuprod}{(\overline{\bchi}_t^{\sigXastarhat[\tau]})}}\|_{\sigXastarhat[\tau, t]}
$
on the lower-left plot. In this expression, $\tau$ denotes the latest recentering time and the new recentering should be performed when the seminorm tracker attains $(1 - \frac{\varepsilon}{2})$, according to \eqref{eq:recentering_rule}.

The explosive behavior of this deviation provides indirect numerical evidence that the signature expansion of the value process (and, of course, of the optimal control as well) is local and the corresponding path-dependent map is only analytic, but not entire, as stated in Section~\ref{section:riccati_prelim}.

We remark that the development of faster and more efficient schemes adapted to the tensor algebra structure and able to handle the shuffle non-linearity and stiffness of the equation remains a challenging task, see, for instance, \cite*{abijaber2024fourier} for the Markovian case.

We also demonstrate that the case of the time-dependent coefficient of the running reward discussed in Remark~\ref{rmk:timedep_running} can be treated numerically in a similar manner. Namely, we consider the same stochastic optimal control problem except for the time-dependent running target 
 $\mu(t) = 2\sqrt{t}$, hence yielding the time-dependent source coefficient $({\bm f}_t)_{t \in [0, T]}$. This is solved using the Riccati equation with the time-dependent source as discussed in Remark~\ref{rmk:timedep_running}. The numerical results are presented in Figure~\ref{fig:control_timedep_mu}.

 \begin{figure}[H]
    \centering
    \includegraphics[width=\linewidth]{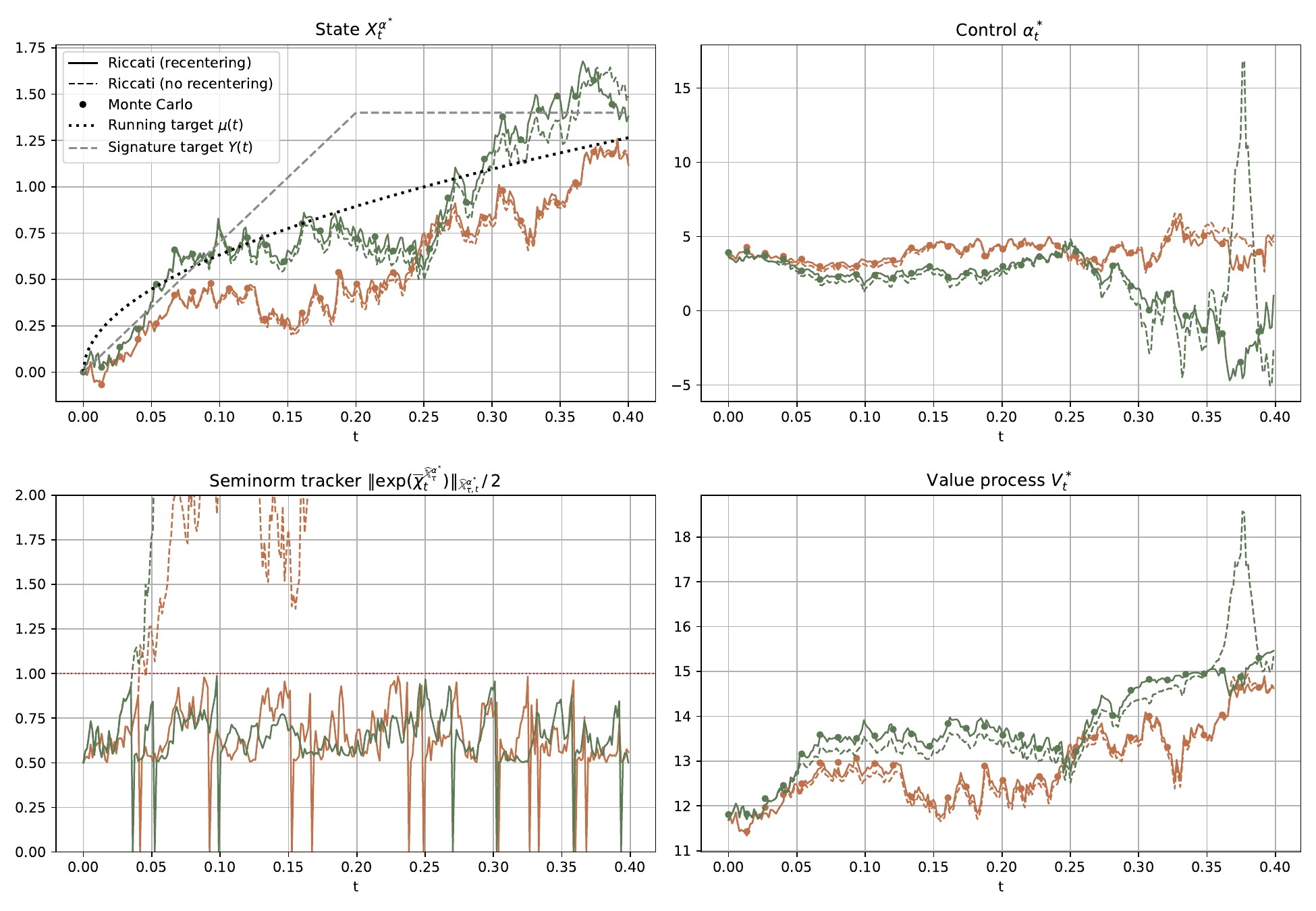}
    \label{fig:psi_series}
    \caption{Two realizations (green and brown) of the solution to the stochastic optimal control problem \eqref{eq:objective_numerical_illustr} with time-dependent $\mu(t)$ obtained via solving the Riccati equation with / without recentering (solid / dashed) and via the Monte Carlo estimation of \eqref{eq:value_fct_control_mc} (dotted). The upper left plot shows the state variable $X^\alpha$ constructed using the control given by each of the three methods, the signature target function $Y(t)$ (dashed gray), and the running target $\mu(t)$ (dotted black). The upper and lower right plots show the trajectories of the optimal control and the value process evaluated by each method, and the lower left plot shows the trajectories of the seminorm tracker.}
    \label{fig:control_timedep_mu}
\end{figure}

 \begin{sqremark}
    Instead of tracking the seminorm condition as suggested by \eqref{eq:recentering_rule}, one may decide to recenter at each time step $t_k$, solving a new Riccati equation to obtain the coefficients $\bchi_{t_k}^{\sigXastarhat[t_k]}$. The value function and the optimal control representation in this case contain only one term and are given simply by
    $$
    V_{t_k}^* = \bracket{\bchi_{t_k}^{\sigXastarhat[t_k]}}{\emptyword} + \int_0^t\bracket{{\bm f}}{\sigXastarhat[s]}\,ds, \qquad \alpha_{t_k}^* = \bracket{\bchi_{t_k}^{\sigXastarhat[t_k]}\proj{1}}{\emptyword}.
    $$
    The price to pay is of course the computational cost associated with solving a new Riccati equation at each discretization step instead of solving it on a generally much sparser set of stopping times \eqref{eq:recentering_rule}. 
\end{sqremark}
}

\subsection{Signature lifts of  Volterra control problems}\label{sect:volterra}

We show how our framework nests certain non-Markovian Volterra control problems. {\Dim For $\alpha \in \mathcal{A}$}, let $Y^\alpha$ be a controlled Volterra process defined by
\begin{align}
Y_t^{\alpha}
&= y_0 + \int_0^t K(t,s)\big(\alpha_s\,ds + {dW_s^{\Q}}\big) = y_0 + \int_0^t K(t,s)\, dX_s^{\alpha},
\end{align}
where {$X^\alpha$ is given by \eqref{eq:controlled_var_def_weak}}, $y_0 \in \mathbb R$ and $K:[0,T]^2 \to \mathbb{R}$ is a measurable kernel satisfying
$$\sup_{t \le T} \int_0^T K(t,s)^2 ds < \infty.$$ 
We consider a reward to be maximized given by  
\begin{align}\label{eq:costvolterra}
J_{\text{Volterra}} (\alpha) =\mathbb{E}\!\left[\int_0^T \left(-\frac{\alpha_s^2}{2} + \nu (Y_s^\alpha)^{2m} + \eta (X^{\alpha}_s)^{2n} \right)ds \right],
\end{align}
for some $\eta,\nu  \leq 0$ and $n,m \in \mathbb N$.  For $m,n \in \{0,1\}$, the problem reduces to a linear–quadratic Volterra control problem. Variants of such problems in the linear-quadratic setting have been studied in various applications, notably in optimal execution with transient market impact with a Volterra propagator; see for instance \cite*{abi2025optimal}.  For $m,n  \ge 2$, no general explicit solution is known, and classical dynamic programming approaches fail due to the lack of a finite-dimensional Markovian state.

We now show how this class of Volterra control problems can be embedded into our framework by writing $J_{\text{Volterra}}$ in the form \eqref{eq:weak_primal_problem}. The key observation is that, for a broad class of kernels, the Volterra process $Y^\alpha$ admits a representation as a linear functional of the time-augmented signature of the driving process $X^\alpha$, namely
\begin{align}\label{eq:YtoSig}
Y_t^\alpha = \langle \boldsymbol{k}_t,\widehat{\mathbb X}_t^\alpha\rangle, 
\end{align}
for some coefficient sequence $\boldsymbol{k}_t$, which may be either time-dependent or time-independent and may contain finitely or infinitely many nonzero terms. Such representations have recently been derived in \cite*{abi2024path} for broad classes of kernels. 

The following proposition provides two important examples.

\begin{proposition}\label{P:kernels}
For kernels  of the form
$K(t,s)=\sum_{i=0}^{N} k_i(t)\frac{s^i}{i!},$
for some coefficients $(k_i(t))_{i\ge0}$,  the representation \eqref{eq:YtoSig} holds with time-dependent coefficients
$$\boldsymbol{k}_t = y_0\emptyword + \sum_{i=0}^{N} k_i(t)\word{0}\conpow{i}\word{1}.$$
For kernels  of the form
$K(t,s)=\sum_{i=1}^N c_i e^{-\lambda_i(t-s)},$
for some $c_i,\lambda_i\in\mathbb R$, the representation \eqref{eq:YtoSig} holds with time-independent coefficients
$$
\boldsymbol{k} = y_0\emptyword+\sum_{i=1}^N c_i\big(\word{1}\otimes\shuexp{-\lambda_i \word 0}\big).
$$
\end{proposition}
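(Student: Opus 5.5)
The plan is to compute the signature pairings directly from the definition of iterated Stratonovich integrals, with $X_0^\alpha = 0$ and $\sigXahat[t]^{\emptyword}=1$. The empty-word term $y_0\emptyword$ produces $y_0$ in both cases, so only the integral part $\int_0^t K(t,s)\,dX_s^\alpha$ needs to be identified.

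\textbf{Polynomial kernels.} By the definition of the time-augmented signature, the word $\word{0}\conpow{i}\word{1}$ gives the iterated integral
$$\bracket{\word{0}\conpow{i}\word{1}}{\sigXahat[t]}=\int_0^t\Big(\int_{0\le t_1\le\dots\le t_i\le s}dt_1\cdots dt_i\Big)\circ dX^\alpha_s=\int_0^t \frac{s^i}{i!}\circ dX_s^\alpha .$$
The integrand $s^i/i!$ is deterministic and of finite variation, so the Stratonovich correction vanishes and the integral equals the Itô integral $\int_0^t \frac{s^i}{i!}dX_s^\alpha$. Multiplying by $k_i(t)$ and summing over $i$ gives $\int_0^t K(t,s)\,dX^\alpha_s$. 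Here $t$ enters only through the scalars $k_i(t)$, which is why the coefficient $\boldsymbol{k}_t$ is time-dependent.

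\textbf{Exponential kernels.} First compute the pairing of $\shuexp{-\lambda\word{0}}$ with a signature. Since $\word{0}\shupow{n}=n!\,\word{0}\conpow{n}$, we have $\shuexp{-\lambda\word 0}=\sum_n(-\lambda)^n\word{0}\conpow{n}$. Next, use the concatenation $\word{1}\otimes\word{0}\conpow{n}$: integrating $\word{1}$ first and then the $n$ time letters gives
$$\bracket{\word{1}\word{0}\conpow{n}}{\sigXahat[t]}=\int_0^t\frac{(t-s)^n}{n!}\circ dX^\alpha_s .$$
The innermost integral is $\circ dX_s$ at time $s$, followed by $n$ ordered time integrals over $[s,t]$. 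Summing over $n$ with weights $(-\lambda)^n$ gives $\int_0^t e^{-\lambda(t-s)}dX^\alpha_s$, where Itô and Stratonovich integrals again agree because the integrand is deterministic and smooth in $s$.

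The main step is justifying the exchange of the infinite sum with the stochastic integral. The partial sums $\sum_{n\le N}(-\lambda)^n(t-s)^n/n!$ converge to $e^{-\lambda(t-s)}$ uniformly on $[0,t]$. One route is the Itô isometry for the martingale part plus dominated convergence for the drift part, which requires $\int\alpha_s^2 ds<\infty$; this gives convergence in $L^2$ and hence a.s. along a subsequence. A cleaner alternative avoids stochastic limits: integrate by parts pathwise,
$$\int_0^t \frac{(t-s)^n}{n!}dX_s^\alpha=\int_0^t\frac{(t-s)^{n-1}}{(n-1)!}X^\alpha_s\,ds .$$
This gives the bound $|X|_\infty t^n/n!$, so the series is absolutely summable path by path, and the seminorm $\|\boldsymbol{k}\|_{\sigXahat[t]}$ is finite. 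Linearity in $c_i$ then yields the stated time-independent $\boldsymbol{k}$.
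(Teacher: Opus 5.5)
Your proposal is correct. For polynomial kernels it coincides with the paper's sketch: $\bracket{\word{0}\conpow{i}\word{1}}{\sigXahat[t]}=\int_0^t \frac{s^i}{i!}\,dX^\alpha_s$, followed by a finite sum.

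For exponential kernels the routes differ. The paper gives no computation and simply invokes the signature representation of Ornstein--Uhlenbeck-type processes from \cite*[Example 4.3]{abi2024path}. You instead prove the formula directly, from three ingredients:
\begin{itemize}
\item the expansion $\shuexp{-\lambda\word{0}}=\sum_{n}(-\lambda)^n\word{0}\conpow{n}$;
\item the identity $\bracket{\word{1}\word{0}\conpow{n}}{\sigXahat[t]}=\int_0^t\frac{(t-s)^{n-1}}{(n-1)!}X^\alpha_s\,ds=\int_0^t\frac{(t-s)^n}{n!}\,dX^\alpha_s$ for $n\ge 1$, obtained by repeated integration and integration by parts with $X^\alpha_0=0$;
\item the pathwise bound $\sup_{s\le t}|X^\alpha_s|\,t^n/n!$.
\end{itemize}

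Your route is self-contained and elementary, and it establishes something the paper leaves implicit. Since $\bm{k}$ has infinitely many nonzero coefficients, the pairing $\bracket{\bm{k}}{\sigXahat[t]}$ only makes sense once $\|\bm{k}\|_{\sigXahat[t]}<\infty$, and your estimate gives this for every path. Summing the series, uniformly in $s$, then yields $X^\alpha_t-\lambda\int_0^t e^{-\lambda(t-s)}X^\alpha_s\,ds=\int_0^t e^{-\lambda(t-s)}\,dX^\alpha_s$.

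An OU-type argument, in the spirit of the cited reference and of the shift calculus used elsewhere in the paper, would go differently. It would observe that $\bell:=\word{1}\otimes\shuexp{-\lambda\word{0}}$ satisfies $\bell|_{\word{0}}=-\lambda\bell$, $\bell|_{\word{1}}=\emptyword$ and $\bell|_{\word{11}}=0$. Itô's formula for linear signature functionals then gives $dY_t=-\lambda Y_t\,dt+dX^\alpha_t$, $Y_0=0$, at once. That approach is more structural and extends to other linear signature dynamics. However, it requires justifying Itô's formula for an infinite linear functional, which your pathwise estimate avoids.
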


\begin{proof}
We only sketch the proof of the first case. Note that for $i \geq 0$, $$\frac{s^i}{i!}= \int_{0<s_1<\ldots <s_i} ds_1 \ldots ds_i  = \langle \word{0}^{\otimes i} , \widehat{\mathbb X}^{\alpha}_s\rangle,   $$ so that 
$ \int_0^t \frac{s^i}{i!} dX_s^\alpha = \int_0^t \langle \word{0}\conpow{i}, \widehat{\mathbb X}^{\alpha}_s\rangle dX_s^\alpha  = \langle \word{0}\conpow{i} \word{1} , \widehat{\mathbb X}^{\alpha}_t\rangle.$ Substituting the power series expansion of $K$ into the definition of $Y^\alpha$ and exchanging summation and integration yields  $$
Y_t^\alpha
=y_0+\sum_{i=0}^{N}k_i(t)\int_0^t \frac{s^i}{i!} dX_s^\alpha = y_0 +  \sum_{i=0}^{N}k_i(t) \langle \word{0}\conpow{i} \word{1} , \widehat{\mathbb X}^{\alpha}_t\rangle $$
which proves the first point. The second case follows directly from the signature representation of Ornstein Uhlenbeck-type processes as done in  \cite*[Example 4.3]{abi2024path}.
\end{proof}

\begin{remark}
The two examples in Proposition~\ref{P:kernels} cover broad classes of kernels and also provide natural approximation schemes. For instance, fractional kernels of the form $K(t,s)=(c+t-s)^{\beta-1}\mathbf{1}_{{s<t}}$, with $c\geq 0$ and $\beta>0$, admit an infinite power series expansion of the form
$$
K(t,s)=\sum_{i=0}^{\infty}(-1)^i\frac{(\beta - 1)(\beta -2)\ldots(\beta - i)}{i!}(c+t)^{\beta-1-i}{s^i},
$$

and truncating at finite order yields a natural approximation. Likewise, finite sums of exponentials form a flexible class that can approximate a wide range of kernels. We refer to \cite*{abi2024path} for further examples and discussion.
\end{remark}

As a consequence, by Proposition~\ref{P:kernels} and the shuffle property, polynomial rewards in $Y^\alpha$ can be rewritten as polynomial functionals of the signature of $X^\alpha$, so that the Volterra control problem \eqref{eq:costvolterra} falls within our framework:
\begin{align}
J_{\text{Volterra}}(\alpha)
=\mathbb E\left[\int_0^T \left(-\frac{\alpha_s^2}{2}+\langle {\bm f}_s,\widehat{\mathbb X}_s^\alpha\rangle\right)ds\right],
\end{align}
with
\begin{align}
{\bm f}_s = {\nu\boldsymbol{k}_s^{\shuprod 2m}+ \eta(2n)!\word{1}^{\otimes 2n}},
\end{align}

where $\boldsymbol{k}_s$ is given as in Proposition~\ref{P:kernels}. In particular, whenever the orders $m,n$ and coefficients are chosen so that our main Theorem~\ref{thm:oc_sig_repr} applies, this yields a tractable representation and implementation of the optimal control for a nontrivial class of genuinely nonlinear and non-Markovian Volterra control problems. For instance, this is the case for truncated time-independent kernels $\bm k$ with $m < n$ and $\eta < 0$, while the case of time-dependent coefficients is covered by the conjectured extension of Remark~\ref{rmk:timedep_running}. While our current theory is formulated under specific structural assumptions within the class $\mathcal B$, the above representation remains valid more generally and suggests that the scope of the method extends beyond the present setting.

\bibliographystyle{plainnat}
\bibliography{refs.bib}

\end{document}